\documentclass{article}
\usepackage{geometry}

\usepackage{amsmath,amsfonts,amssymb,mathtools,amsthm}
\usepackage{bm}
\usepackage{enumitem}
\usepackage{tabularx}
\usepackage{array}
\usepackage{xcolor}
\usepackage{graphicx}
\usepackage{hyperref}

\usepackage{algorithm}
\usepackage{algorithmic}

\newtheorem{theorem}{Theorem}[section]
\newtheorem{definition}{Definition}[section]
\newtheorem{lemma}{Lemma}[section]
\newtheorem{corollary}{Corollary}[section]
\newtheorem{proposition}{Proposition}[section]

\newtheorem{remark}{Remark}

\newtheorem{example}{Example}

\DeclareMathOperator{\rank}{rank}
\DeclareMathOperator{\range}{range}

\newcommand{\email}[1]{\href{mailto:#1}{#1}}

\title{Revisiting CUR Perturbation Analysis: A Local Tangent-Space Expansion}

\author{
Longxiu Huang\thanks{Department of CMSE, Department of Mathematics, Michigan State  University, East Lansing, MI 48840
  (\email{huangl3@msu.edu}).}
}
\date{}

\begin{document}

\maketitle

\begin{abstract}
CUR decompositions approximate a matrix using selected columns, rows, and their intersection. Classical CUR theory provides exactness results for low-rank matrices and perturbation bounds controlled by the size of the noise. In this work we develop a local perturbation expansion for a fixed-index rank-truncated CUR map near an admissible rank-\(r\) matrix. We show that the Fr\'echet derivative of the rank-truncated CUR map is a sampling-induced oblique tangent-space projector determined by the selected rows and columns. Consequently, the local recovery error for an underlying low-rank matrix is governed not by the full perturbation norm alone, but by the image of the perturbation under this sampling-induced tangent projector. In particular, perturbations that are invisible to the selected rows and columns are removed to first order.  We compare this behavior with the classical local expansion of the rank-\(r\) SVD truncation. SVD removes orthogonal-normal perturbations to first order, whereas rank-truncated CUR removes perturbations in the kernel of the sampling-induced oblique tangent projector. Numerical experiments illustrate these regimes and confirm the predicted first- and second-order local rates.
\end{abstract}


\section{Introduction}
\label{sec:introduction}
 Low-rank approximation is a fundamental tool in numerical linear algebra,
scientific computing, data analysis, and model reduction. Among low-rank
approximation methods, CUR decompositions occupy a special role because they
represent a matrix using actual columns and rows of the input matrix. Given
\(A\in\mathbb R^{m\times n}\), a CUR approximation has the form
\(A\approx C U_{\rm cur} R\), where \(C\) consists of selected columns of
\(A\), \(R\) consists of selected rows of \(A\), and the middle factor is
computed from their intersection. In the standard skeleton form, if \(S\) and
\(P\) select rows and columns, one writes
\(A\approx AP(S^\top A P)^\dagger S^\top A\). This representation is
interpolatory: it uses entries of the original matrix rather than dense
singular vectors. Consequently, CUR decompositions are useful both as
interpretable low-rank factorizations and as computational tools when only a
subset of rows and columns can be accessed or stored.

The interpretability and sampling structure of CUR have led to applications in
data analysis and scientific computing. In randomized numerical linear
algebra, CUR decompositions provide column-row factorizations with
relative-error guarantees and interpretable features
\cite{drineas2008relative,mahoney2011randomized,mahoney2009cur}. CUR-type
methods have also been used in corrupted low-rank recovery problems, including
robust PCA and robust CUR decompositions for imaging applications
\cite{cai2021ircur,cai2021robustcur}. In reduced-order modeling and numerical
PDEs, CUR and cross approximation methods provide sampling-based approximations
of high-rank quantities arising from nonlinear terms or random inputs, while
avoiding full matrix access
\cite{donello2023cur,palkar2025adaptivecur,zheng2025semi}. These applications
motivate a perturbation theory that identifies not only how large the CUR error
can be, but also which components of the perturbation are seen through the
selected rows and columns.

The mathematical theory of CUR and related skeleton or cross approximations is
extensive. Classical deterministic analyses establish exactness and error
bounds in terms of the selected intersection submatrix, its conditioning, and
volume-type criteria
\cite{goreinov1997theory,goreinov2001maximal,goreinov2010good}. A basic
exactness principle is that if \(M\) has rank \(r\), and the selected rows and
columns capture the row and column spaces of \(M\), then the corresponding CUR
approximation recovers \(M\) exactly. Several equivalent viewpoints on CUR,
including skeleton approximation, projection approximation, and interpolatory
decomposition, are discussed in \cite{hamm2020perspectives}. Randomized
CUR methods select rows and columns using leverage scores, subspace sampling,
or related procedures, and yield high-probability residual bounds, often
relative to the best rank-\(r\) approximation error
\cite{halko2011finding}. For symmetric positive semidefinite matrices,
Nystr\"om approximations provide closely related column-sampling methods with
deterministic and randomized error analyses
\cite{zhang2008improved,gittens2016revisiting}.

Perturbation analysis addresses a different but closely related question.
Suppose that the observed matrix is \(A=M+E\), where \(M\) has rank \(r\) and
\(E\) is a perturbation. If one forms a CUR approximation from \(A\), how
accurately does it recover \(M\)? Existing perturbation results for CUR,
including the finite-noise analysis in \cite{hammhuang2021perturbations},
provide bounds in unitarily invariant norms and show how the recovery error
depends on the size of \(E\) and on the conditioning of the selected rows and
columns. Such estimates are essential for stability analysis. However,
norm-based estimates do not identify which part of the perturbation produces
the leading-order error. Two perturbations with the same norm may interact
very differently with the selected rows and columns, and hence may lead to
very different CUR recovery errors.

The purpose of this paper is to complement finite-noise perturbation bounds by
developing a local differential perturbation theory for rank-truncated CUR. We
fix the selected rows and columns and study the behavior of the CUR map near an
admissible rank-\(r\) matrix. Let \(M=U\Sigma V^\top\in\mathbb R^{m\times n}\)
have rank \(r\), and let \(S\) and \(P\) select rows and columns such that
\(\rank(S^\top U)=r\) and \(\rank(V^\top P)=r\). These are the standard
admissibility conditions ensuring exact CUR recovery of \(M\). We consider the
fixed-index rank-truncated CUR map
\[
    \Phi_r(A)=AP(S^\top A P)_r^\dagger S^\top A,
\]
where \((S^\top A P)_r\) denotes the best rank-\(r\) approximation of the
intersection matrix. At the base point \(M\), this rank-truncated map agrees with the ordinary CUR
map, since \(S^\top M P\) already has rank \(r\). Under perturbation, however,
the truncation changes the middle factor. The rank truncation is important for
local analysis: although \(S^\top M P\) has rank \(r\), the perturbed
intersection \(S^\top(M+E)P\) may have rank larger than \(r\), and the ordinary
Moore--Penrose pseudoinverse is not smooth across rank changes. The truncated
intersection keeps the middle matrix on the fixed-rank manifold and yields a
smooth local expansion.

Our main result identifies the Fr\'echet derivative of \(\Phi_r\) at \(M\).
Define the sampling-induced oblique projectors
\(\Pi_U=U(S^\top U)^\dagger S^\top\) and
\(\Pi_V=P(V^\top P)^\dagger V^\top\). Then
\[
    D\Phi_r(M)[E]=\Pi_U E+E\Pi_V-\Pi_U E\Pi_V.
\]
This operator has the same inclusion--exclusion form as the orthogonal tangent
projector onto the rank-\(r\) matrix manifold, but with the orthogonal
projectors \(UU^\top\) and \(VV^\top\) replaced by the sampling-induced
oblique projectors \(\Pi_U\) and \(\Pi_V\). Thus the derivative of the
rank-truncated CUR map is a sampling-induced oblique tangent-space projector,
which we denote by
\[
    \mathcal I_{T_M}^{S,P}(E)=\Pi_U E+E\Pi_V-\Pi_U E\Pi_V.
\]
Consequently,
\(\Phi_r(M+E)-M=\mathcal I_{T_M}^{S,P}E+O(\|E\|^2)\). The leading CUR recovery
error is therefore governed not by the full perturbation \(E\) alone, but by
the component of \(E\) retained by this sampling-induced tangent projector.

This expansion gives a precise local interpretation of CUR recovery. CUR is
locally accurate when the perturbation is small in the directions seen by the
selected rows and columns. In particular, if \(S^\top E=0\) and \(EP=0\), then
\(\mathcal I_{T_M}^{S,P}E=0\), and hence
\(\Phi_r(M+E)-M=O(\|E\|^2)\). Thus perturbations that are invisible to the
selected rows and columns are removed to first order. This observation
explains a regime in which CUR can recover an underlying low-rank matrix
especially accurately: the perturbation may be large away from the sampled
rows and columns, but it does not contribute to the leading-order CUR recovery
error.

The result also clarifies the relation between CUR and the truncated singular
value decomposition. The classical local expansion of the rank-\(r\) SVD
truncation is
\((M+E)_r-M=\mathcal P_{T_M}E+O(\|E\|^2)\), where
\(\mathcal P_{T_M}E=UU^\top E+EVV^\top-UU^\top EVV^\top\) is the orthogonal
projector onto the tangent space of the rank-\(r\) matrix manifold at \(M\).
This follows from the local metric-projection interpretation of truncated SVD
\cite{uschmajew2020geometric,lewis2008alternating}. Hence SVD and CUR retain
different first-order components of the perturbation. The truncated SVD
removes orthogonal-normal perturbations to first order, whereas rank-truncated
CUR removes perturbations in the kernel of the sampling-induced oblique tangent
projector. Neither method is uniformly superior; their local recovery behavior
depends on the geometry of the perturbation and on the selected rows and
columns.

The sampling-induced tangent projector appearing here is related to
interpolatory and oblique tangent-space projectors used in low-rank manifold
methods. For example, oblique tangent projections with interpolation structure
arise in collocation methods for nonlinear differential equations on low-rank
manifolds \cite{dektor2025collocation}. In the present work, however, the
projector is not introduced as a discretization or time-integration device. It
arises intrinsically as the Fr\'echet derivative of a nonlinear
rank-truncated CUR map. This connects CUR perturbation analysis with the local
geometry of the fixed-rank matrix manifold.

\subsection{Contribution}
The main contributions are as follows.
\begin{enumerate}[label=(\roman*)]
    \item We compute the Fr\'echet derivative of the fixed-index
    rank-truncated CUR map near an admissible rank-\(r\) matrix. The derivative
    is the sampling-induced oblique tangent projector
    \[
        \mathcal I_{T_M}^{S,P}E
        =
        \Pi_U E+E\Pi_V-\Pi_U E\Pi_V.
    \]

    \item We show that the leading recovery error is governed by this
    sampling-induced tangent projector. Thus the relevant first-order quantity
    is \(\|\mathcal I_{T_M}^{S,P}E\|\), rather than the full perturbation norm
    alone.

    \item We identify perturbation structures for which CUR has favorable
    local recovery behavior. In particular, perturbations that are invisible to
    the selected rows and columns are removed to first order.

    \item We compare the CUR expansion with the classical SVD expansion and
    identify regimes in which CUR has smaller first-order recovery error than
    SVD, as well as regimes in which SVD has smaller first-order recovery error
    than CUR.
\end{enumerate}

The remainder of the paper is organized as follows.
Section~\ref{sec:preliminaries} fixes notation and reviews the ingredients
needed for the analysis. Section~\ref{sec:main-theory} contains the
main local perturbation result. There we first recall a quantitative
rank-truncation expansion and then use it to compute the Fr\'echet derivative
of the rank-truncated CUR map. Section~\ref{sec:comparison} interprets the
result as a directional recovery criterion, compares the CUR expansion with
the classical local expansion of the truncated SVD, and discusses the role of
sampling obliqueness. Section~\ref{sec:numerics} presents numerical
experiments that verify the predicted first- and second-order local behavior
for generic, sampling-invisible, orthogonal-normal, and gradually visible
perturbations. Section~\ref{sec:conclusion} summarizes the results and
outlines possible extensions to tensor CUR and tensor cross approximations.

\section{Notation and preliminaries}
\label{sec:preliminaries}

We write \(\mathbb R^{m\times n}\) for the space of real \(m\times n\)
matrices. For a matrix \(A\), we denote its rank, transpose, and
Moore--Penrose pseudoinverse by
\(
    \rank(A),  A^\top,  A^\dagger,
\)
respectively. The spectral and Frobenius norms are denoted by
\(
    \|A\|_2,
    \|A\|_{\mathrm F}.
\)
When the choice of norm is clear, we simply write \(\|A\|\).
\begin{definition}[Fr\'echet derivative, {\cite[Definition~3.1.5]{ortega2000iterative}}]
\label{def:frechet-derivative}
For a map \(F:\mathbb R^{m\times n}\to \mathbb R^{p\times q}\), we say that
\(F\) is Fr\'echet differentiable at \(X\in\mathbb R^{m\times n}\) if there
exists a linear map
\[
    DF(X):\mathbb R^{m\times n}\to\mathbb R^{p\times q}
\]
such that
\[
    \lim_{\|H\|\to 0}
    \frac{\|F(X+H)-F(X)-DF(X)[H]\|}{\|H\|}
    =
    0.
\]
The linear map \(DF(X)\), when it exists, is unique and is called the
Fr\'echet derivative of \(F\) at \(X\). We write \(DF(X)[H]\) for the
derivative applied to the perturbation direction \(H\). Equivalently,
\[
    F(X+H)=F(X)+DF(X)[H]+o(\|H\|).
\]
\end{definition}

For the Moore--Penrose inverse restricted to the fixed-rank manifold, we write
\(
    D(\cdot^\dagger)(W)[H]
\)
for the Fr\'echet derivative of the pseudoinverse map at \(W\) in the
fixed-rank direction \(H\). When no confusion can arise, we abbreviate this as
\(
    D(W^\dagger)[H].
\)

For a positive integer \(r\), let \(A_r\) denote the best rank-\(r\)
approximation of \(A\), obtained by truncating the SVD.

\subsection{The rank-\texorpdfstring{\(r\)}{r} matrix manifold}
We recall standard facts about the fixed-rank matrix manifold; see, for
example, \cite{vandereycken2013low,cai2019accelerated}. 
Let \[
    \mathcal M_r
    =
    \{X\in\mathbb R^{m\times n}:\rank(X)=r\}.
\]
This is a smooth embedded manifold of dimension \(r(m+n-r)\).

Let
\(
    M=U\Sigma V^\top\in\mathcal M_r
\) 
be a compact SVD, where
\(
    U\in\mathbb R^{m\times r},
    V\in\mathbb R^{n\times r},
    U^\top U=V^\top V=I_r,
\)
and
\(
    \Sigma=\operatorname{diag}(\sigma_1,\ldots,\sigma_r), 
    \sigma_1\ge\cdots\ge\sigma_r>0.
\)
The tangent space of \(\mathcal M_r\) at \(M\) is
\[
    T_M\mathcal M_r
    =
    \{
        UX^\top+YV^\top:
        X\in\mathbb R^{n\times r},\
        Y\in\mathbb R^{m\times r}
    \}.
\]
Equivalently,
\[
    T_M\mathcal M_r
    =
    \{
        Z\in\mathbb R^{m\times n}:
        (I-UU^\top)Z(I-VV^\top)=0
    \}.
\]
The orthogonal projector onto \(T_M\mathcal M_r\) is
\[
    \mathcal P_{T_M}(Z)
    =
    UU^\top Z+ZVV^\top-UU^\top ZVV^\top.
\]
The corresponding normal projector is
\[
    \mathcal P_{T_M^\perp}(Z)
    =
    (I-UU^\top)Z(I-VV^\top).
\]
Thus
\(
    Z
    =
    \mathcal P_{T_M}Z
    +
    \mathcal P_{T_M^\perp}Z.
\)

\subsection{Sampling matrices and rank-truncated CUR}
\label{subsec:sampling-cur}

Let
\(
    S\in\mathbb R^{m\times s}, 
    P\in\mathbb R^{n\times c}
\) 
be column-selection matrices. Thus \(S^\top A\) consists of selected rows of
\(A\), while \(AP\) consists of selected columns of \(A\). Equivalently, the columns of \(S\) and \(P\) are selected columns of the
identity matrices \(I_m\) and \(I_n\), respectively. In particular,
\(
    S^\top S=I_s,
  P^\top P=I_c,
\)
while \(SS^\top\) and \(PP^\top\) are coordinate projectors onto the selected
row and column indices.

We write
\[
    C(A)=AP,
    \qquad
    R(A)=S^\top A,
    \qquad
    W(A)=S^\top A P.
\]
The ordinary fixed-index CUR map is
\(
    \Phi(A)
    =
    AP\,W(A)^\dagger S^\top A.
\) 
In this paper we study the rank-truncated CUR map
\[
    \Phi_r(A)
    =
    AP\,W_r(A)^\dagger S^\top A,
\]
where
\(
    W_r(A)=(S^\top A P)_r.
\) 
The rank truncation is important for local perturbation analysis. At an
admissible rank-\(r\) matrix \(M\), the intersection \(W(M)\) has rank \(r\),
but for a perturbed matrix \(M+E\), the matrix \(W(M+E)\) may have rank larger
than \(r\). The ordinary pseudoinverse is not smooth across such rank changes,
whereas the rank-truncated pseudoinverse remains on the fixed-rank manifold.

\subsection{Admissibility and exactness}
\label{subsec:admissibility}

\begin{definition}[Admissible sampling]
\label{def:admissible}
Let \(M=U\Sigma V^\top\in\mathcal M_r\). The sampling pair \((S,P)\) is
called admissible for \(M\) if
\[
    \rank(S^\top U)=r,
    \qquad
    \rank(V^\top P)=r.
\]
\end{definition}

Admissibility means that the selected columns capture the column space of \(M\),
and the selected rows capture the row space of \(M\). In this case,
\(
    W(M)
    =
    S^\top M P
    =
    (S^\top U)\Sigma(V^\top P)
\)
has rank \(r\).

We shall use the following standard exactness property of CUR.

\begin{lemma}[Exactness of admissible CUR \cite{hamm2020perspectives}]
\label{lem:exactness}
Let \(M=U\Sigma V^\top\in\mathcal M_r\), and suppose that \((S,P)\) is
admissible for \(M\). Then
\(
    \Phi_r(M)=M.
\)
Equivalently,
\(
    M
    =
    MP(S^\top M P)^\dagger S^\top M.
\)
\end{lemma}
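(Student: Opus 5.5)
The plan is to reduce everything to the factorization \(M=U\Sigma V^\top\) and then to the identity \(XX^\dagger X=X\) for pseudoinverses.

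First I would show that the truncation is inactive at \(M\). Write \(A_S=S^\top U\in\mathbb R^{s\times r}\) and \(B_P=V^\top P\in\mathbb R^{r\times c}\). Then
\[
    W(M)=S^\top MP=A_S\Sigma B_P.
\]
Admissibility says \(A_S\) has full column rank \(r\) and \(B_P\) has full row rank \(r\). Since \(\Sigma\) is invertible, \(W(M)\) has rank exactly \(r\). Its best rank-\(r\) approximation is therefore itself, so \(W_r(M)=W(M)\) and \(\Phi_r(M)=\Phi(M)\). This gives the claimed equivalence of the two formulations, and it remains to show \(MP\,W(M)^\dagger S^\top M=M\).

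Next I would compute the pseudoinverse of the product using the full-rank structure. Because \(A_S\) has full column rank, \(A_S^\dagger A_S=I_r\). Because \(B_P\) has full row rank, \(B_PB_P^\dagger=I_r\). I expect the one point needing care to be the product rule
\[
    (A_S\Sigma B_P)^\dagger=B_P^\dagger\Sigma^{-1}A_S^\dagger,
\]
since the rule \((XY)^\dagger=Y^\dagger X^\dagger\) fails in general. I would avoid justifying it by checking the four Penrose conditions. Instead I would use only \(WW^\dagger W=W\) for \(W=W(M)\):
\[
    A_S\Sigma B_P\,W^\dagger\,A_S\Sigma B_P=A_S\Sigma B_P.
\]
Multiplying on the left by \(\Sigma^{-1}A_S^\dagger\) and on the right by \(B_P^\dagger\Sigma^{-1}\), the one-sided inverse identities give
\[
    B_P W^\dagger A_S=\Sigma^{-1}.
\]

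Finally, the reconstruction follows directly from this identity:
\[
    MP\,W^\dagger S^\top M=U\Sigma\,(V^\top P)\,W^\dagger\,(S^\top U)\,\Sigma V^\top
    =U\Sigma\,(B_PW^\dagger A_S)\,\Sigma V^\top
    =U\Sigma\Sigma^{-1}\Sigma V^\top
    =M.
\]
This argument never needs the explicit formula for \(W^\dagger\), only the one-sided inverses supplied by admissibility, so I expect no genuine obstacle.
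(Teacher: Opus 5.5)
Your proof is correct. The paper gives no proof of this lemma; it cites \cite{hamm2020perspectives}. Where the paper does the analogous computation, in the proof of Theorem~\ref{thm:cur-derivative}, it uses the explicit formula \(W^\dagger=B_P^\dagger\Sigma^{-1}A_S^\dagger\). That formula is legitimate because \(W=A_S\Sigma B_P\) is a full-rank factorization (full column rank times full row rank). So your caution about \((XY)^\dagger=Y^\dagger X^\dagger\) is warranted in general, but it is not a real obstacle here.

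Your route is genuinely leaner. You use only \(WW^\dagger W=W\) and the one-sided inverses \(A_S^\dagger A_S=I_r\), \(B_PB_P^\dagger=I_r\) to obtain \(B_PW^\dagger A_S=\Sigma^{-1}\). This actually proves more: \(MP\,G\,S^\top M=M\) holds for any inner inverse \(G\) of \(W\), not only the Moore--Penrose one.

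What the explicit formula buys the paper is the finer identities \(MPW^\dagger S^\top=\Pi_U\) and \(PW^\dagger S^\top M=\Pi_V\), which drive the derivative computation. In the oversampled case \(s>r\) or \(c>r\), these need more than \(WW^\dagger W=W\). For example, they use that \(WW^\dagger\) is the orthogonal projector onto \(\range(S^\top U)\). For a general inner inverse, \(MP\,G\,S^\top\) differs from \(\Pi_U\) by a term that is killed only after multiplying by \(S^\top M\). So your identity suffices for exactness but would not by itself yield the projector identifications used later.
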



\subsection{The sampling-induced tangent projector}
\label{subsec:interp-tangent-projector}

Admissible sampling induces two oblique projectors,
\[
    \Pi_U
    =
    U(S^\top U)^\dagger S^\top,
    \qquad
    \Pi_V
    =
    P(V^\top P)^\dagger V^\top.
\]
They satisfy
\(
    \Pi_U U=U, 
    V^\top\Pi_V=V^\top.
\)
In general, \(\Pi_U\) and \(\Pi_V\) are not orthogonal projectors. Instead,
\(\Pi_U\) is the oblique projector onto \(\operatorname{range}(U)\) determined
by the selected row coordinates, and \(\Pi_V\) is the corresponding
right-side oblique projector determined by the selected columns.

\begin{definition} 
\label{def:interp-tangent-projector}
Let \(M=U\Sigma V^\top\in\mathcal M_r\), and let \((S,P)\) be admissible for
\(M\). The sampling-induced oblique tangent projector is the linear
operator
\[
    \mathcal I_{T_M}^{S,P}:\mathbb R^{m\times n}\to\mathbb R^{m\times n}
\]
defined by
\[    \mathcal I_{T_M}^{S,P}(E)
    =
    \Pi_U E+E\Pi_V-\Pi_U E\Pi_V.
\]
\end{definition}
\begin{remark} 
When \(s=r\) and \(c=r\), the matrices \(S^\top U\) and \(V^\top P\) are
square and nonsingular. In this case the projectors \(\Pi_U\) and \(\Pi_V\)
interpolate the selected row and column coordinates. For oversampled choices
\(s>r\) or \(c>r\), the same formulas define least-squares oblique projectors
determined by the sampled coordinates. Thus, throughout the paper,
``sampling-induced'' refers to both the square interpolatory case and the
oversampled least-squares case.
\end{remark}

This operator has the same inclusion--exclusion form as the orthogonal tangent
projector \(\mathcal P_{T_M}\), but with \(UU^\top\) and \(VV^\top\) replaced
by the sampling-induced oblique projectors \(\Pi_U\) and \(\Pi_V\) \cite{dektor2025collocation}.

\begin{lemma} 
\label{lem:interp-tangent-projector}
Let \(M=U\Sigma V^\top\in\mathcal M_r\), and let \((S,P)\) be admissible for
\(M\). Then
\(
    \mathcal I_{T_M}^{S,P}
\)
is a projector onto \(T_M\mathcal M_r\). More precisely,
\[
    \range\bigl(\mathcal I_{T_M}^{S,P}\bigr)
    \subseteq
    T_M\mathcal M_r,
\]
and
\[
    \mathcal I_{T_M}^{S,P}(E)=E
    \qquad
    \text{for all } E\in T_M\mathcal M_r.
\]
Consequently,
\(
    \bigl(\mathcal I_{T_M}^{S,P}\bigr)^2
    =
    \mathcal I_{T_M}^{S,P}.
\)
\end{lemma}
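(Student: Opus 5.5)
The plan is to verify the two stated properties directly from the algebraic identities satisfied by \(\Pi_U\) and \(\Pi_V\), and then deduce idempotence.

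First, the basic facts. Admissibility says \(S^\top U\in\mathbb R^{s\times r}\) has full column rank \(r\), so \((S^\top U)^\dagger S^\top U=I_r\). Hence \(\Pi_U U=U\), and
\[
\Pi_U^2=U(S^\top U)^\dagger (S^\top U)(S^\top U)^\dagger S^\top=\Pi_U .
\]
Since \(\range(\Pi_U)\subseteq\range(U)\) and \(\Pi_U\) fixes \(U\), it follows that \(\Pi_U\) is an (oblique) projector onto \(\range(U)\). In particular \(UU^\top\Pi_U=\Pi_U\), or equivalently \((I-UU^\top)\Pi_U=0\). Symmetrically, \(V^\top P\in\mathbb R^{r\times c}\) has full row rank, so \((V^\top P)(V^\top P)^\dagger=I_r\). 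This gives \(V^\top\Pi_V=V^\top\), and \(\Pi_V(I-VV^\top)=0\) because \(\Pi_V\) ends in the factor \(V^\top\).

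\textbf{Range inclusion.} I will use the characterization \(T_M\mathcal M_r=\{Z:(I-UU^\top)Z(I-VV^\top)=0\}\). The key step is to rewrite the operator as
\[
\mathcal I_{T_M}^{S,P}(E)=E-(I-\Pi_U)E(I-\Pi_V).
\]
Multiplying on the left by \(I-UU^\top\) and on the right by \(I-VV^\top\), the first term contributes \((I-UU^\top)E(I-VV^\top)\). For the second term, the identities above give \((I-UU^\top)(I-\Pi_U)=I-UU^\top\) and \((I-\Pi_V)(I-VV^\top)=I-VV^\top\). The second term therefore contributes exactly the same quantity, and the difference vanishes. Alternatively, one can observe that each of \(\Pi_U E\), \(E\Pi_V\), and \(\Pi_U E\Pi_V\) has the form \(UX^\top\) or \(YV^\top\).

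\textbf{Identity on the tangent space.} Take \(E=UX^\top+YV^\top\). Then \((I-\Pi_U)U=0\) kills the first summand inside \((I-\Pi_U)E(I-\Pi_V)\). For the second summand, \(V^\top(I-\Pi_V)=0\), so \(YV^\top(I-\Pi_V)=0\). Hence \((I-\Pi_U)E(I-\Pi_V)=0\), and \(\mathcal I_{T_M}^{S,P}(E)=E\).

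\textbf{Idempotence.} This is immediate: for any \(E\), the matrix \(\mathcal I_{T_M}^{S,P}(E)\) lies in \(T_M\mathcal M_r\) and is therefore fixed by \(\mathcal I_{T_M}^{S,P}\).

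There is no serious obstacle. The only point needing care is the direction of the one-sided inverse identities (left inverse for \(S^\top U\), right inverse for \(V^\top P\)), which is where admissibility, and not merely nonsingularity in the square case, is used.
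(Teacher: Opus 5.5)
Your proposal is correct and follows essentially the same route as the paper: range inclusion because $\Pi_U$ has columns in $\range(U)$ and $\Pi_V$ has rows in $\range(V^\top)$ (your normal-space computation is an equivalent check), the identity on $T_M\mathcal M_r$ from $\Pi_U U=U$ and $V^\top\Pi_V=V^\top$, and idempotence as a consequence. Your rewriting $\mathcal I_{T_M}^{S,P}(E)=E-(I-\Pi_U)E(I-\Pi_V)$ is a tidy repackaging of the paper's term-by-term expansion, not a different argument.
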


\begin{proof}
First we show that the range is contained in \(T_M\mathcal M_r\). For any
\(E\in\mathbb R^{m\times n}\),
\[
    \mathcal I_{T_M}^{S,P}(E)
    =
    \Pi_U E+E\Pi_V-\Pi_U E\Pi_V .
\]
Since \(\range(\Pi_U)=\range(U)\), the matrices \(\Pi_U E\) and
\(\Pi_U E\Pi_V\) have columns in \(\range(U)\). Since
\(\range(\Pi_V^\top)=\range(V)\), the matrices \(E\Pi_V\) and
\(\Pi_U E\Pi_V\) have rows in \(\range(V^\top)\). Hence
\[
    \mathcal I_{T_M}^{S,P}(E)
    \in
    \{UX^\top+YV^\top:
        X\in\mathbb R^{n\times r},\
        Y\in\mathbb R^{m\times r}\}
    =
    T_M\mathcal M_r .
\]

Next let \(E\in T_M\mathcal M_r\). Then
\(
    E=UX^\top+YV^\top
\)
for some \(X\in\mathbb R^{n\times r}\) and \(Y\in\mathbb R^{m\times r}\).
Using
\(
    \Pi_U U=U,
    V^\top\Pi_V=V^\top,
\)
we obtain
\[
\begin{aligned}
    \mathcal I_{T_M}^{S,P}(E)
    &=
    \Pi_U(UX^\top+YV^\top)
    +(UX^\top+YV^\top)\Pi_V  
    -\Pi_U(UX^\top+YV^\top)\Pi_V \\
    &=
    UX^\top+\Pi_UYV^\top
    +UX^\top\Pi_V+YV^\top   
    -UX^\top\Pi_V-\Pi_UYV^\top \\
    &=
    UX^\top+YV^\top
    =
    E.
\end{aligned}
\]
Since the range of \(\mathcal I_{T_M}^{S,P}\) lies in \(T_M\mathcal M_r\)
and the operator is the identity on \(T_M\mathcal M_r\), it follows that
\(
    \bigl(\mathcal I_{T_M}^{S,P}\bigr)^2
    =
    \mathcal I_{T_M}^{S,P}.
\)
\end{proof}



\section{Local expansion of rank-truncated CUR}
\label{sec:main-theory}

In this section we prove the local expansion of the rank-truncated CUR map
\[
    \Phi_r(A)=AP(S^\top AP)_r^\dagger S^\top A
\]
near an admissible rank-\(r\) matrix \(M\). The key point is that the
rank-\(r\) truncation of the intersection matrix has a first-order expansion
given by the orthogonal tangent projection on the rank-\(r\) manifold of
intersection matrices.

\subsection{A supporting rank-truncation expansion}
The following lemma is a quantitative version of the standard local expansion
of the truncated SVD as the metric projection onto the fixed-rank manifold.
The fact that the derivative of this projection is the orthogonal tangent
projector is classical; see, for example,  
\cite{uschmajew2020geometric,lewis2008alternating}. 
We include
the proof in Appendix~\ref{app:rank-truncation-proof} to keep track of the
explicit local constant.  

\begin{lemma} 
\label{lem:rank-r-truncation-first-order}

Let \(W\in\mathbb R^{s\times c}\) have rank \(r\), and let
\(
    W=Q\Lambda Z^\top
\)
be a compact SVD, where
\(
    Q^\top Q=Z^\top Z=I_r,
    \Lambda=\operatorname{diag}(\lambda_1,\ldots,\lambda_r), 
    \gamma:=\sigma_r(W)=\lambda_r>0.
\)
Let
\[
    \widetilde W=W+E, 
    \widehat W=\widetilde W_r=(W+E)_r.
\] 
Assume
\(
    \|E\|_2\le c\gamma
\) 
for some fixed \(0<c<1/2\). Then
\[
    \widehat W
    =
    W+\mathcal P_{T_W}E+R_{\mathrm{tr}}(E),
\]  
where
\(
    \mathcal P_{T_W}E
    =
    QQ^\top E+EZZ^\top-QQ^\top EZZ^\top.
\) 
Moreover,
\(
    \|R_{\mathrm{tr}}(E)\|_2
    \le
    \frac{12-16c}{1-2c}\frac{\|E\|_2^2}{\gamma}.
\)
\end{lemma}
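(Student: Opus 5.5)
We will prove the expansion through a block decomposition of \(E\) with respect to the singular subspaces of \(W\), followed by perturbation bounds for the top-\(r\) singular subspaces of \(\widetilde W\). Complete \(Q\) and \(Z\) to orthonormal bases \([Q,Q_\perp]\) and \([Z,Z_\perp]\), and write
\[
    E_{11}=Q^\top EZ,\quad
    E_{12}=Q^\top EZ_\perp,\quad
    E_{21}=Q_\perp^\top EZ,\quad
    E_{22}=Q_\perp^\top EZ_\perp .
\]
Then \(\mathcal P_{T_W}E\) consists of the blocks \(E_{11},E_{12},E_{21}\), and the normal part is \(Q_\perp E_{22}Z_\perp^\top\). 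By Weyl's inequality, \(\sigma_r(\widetilde W)\ge(1-c)\gamma\) and \(\sigma_{r+1}(\widetilde W)\le c\gamma\). Since \(c<1/2\), there is a gap of at least \((1-2c)\gamma\) between the top \(r\) singular values of \(\widetilde W\) and the rest. In particular, \(\widehat W\) is uniquely defined.

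\textbf{Writing \(\widehat W\) as an oblique product.} Let \(\widehat Q\) and \(\widehat Z\) be the top-\(r\) left and right singular vectors of \(\widetilde W\). Then
\[
    \widehat W=\widehat Q\widehat Q^\top\widetilde W=\widetilde W\widehat Z\widehat Z^\top .
\]
Rather than expanding the singular vectors directly, I will use the identity
\[
    \widehat W=\widetilde W\,\widehat Z\,\bigl(\widehat Q^\top\widetilde W\widehat Z\bigr)^{-1}\widehat Q^\top\widetilde W .
\]
This holds because \(\widehat Q^\top\widetilde W\widehat Z=\widehat\Lambda\) is invertible. The invertibility and the bound on its inverse both come from the Weyl estimate above.

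The key structural fact is that for any \(r\)-dimensional subspaces close to \(\operatorname{range}(Q)\) and \(\operatorname{range}(Z)\), the product \(\widetilde W Z(Q^\top\widetilde W Z)^{-1}Q^\top\widetilde W\) is computable exactly in blocks. Its \((2,2)\) block is
\[
    E_{21}(\Lambda+E_{11})^{-1}E_{12}=O\bigl(\|E\|^2/\gamma\bigr),
\]
and its other blocks are exactly \(\Lambda+E_{11}\), \(E_{12}\), and \(E_{21}\). So with the \emph{unperturbed} bases this skeleton already equals \(W+\mathcal P_{T_W}E+O(\|E\|^2/\gamma)\).

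\textbf{Replacing the bases and assembling the constant.} It remains to control the error from replacing \(Q,Z\) by \(\widehat Q,\widehat Z\). By Wedin's \(\sin\Theta\) theorem with gap \((1-2c)\gamma\),
\[
    \|\sin\Theta(\widehat Q,Q)\|,\ \|\sin\Theta(\widehat Z,Z)\|\le\frac{\|E\|}{(1-2c)\gamma}.
\]
A cleaner route avoids differencing the bases altogether. Write
\[
    \widehat W-W-\mathcal P_{T_W}E=\bigl(\widehat W-\widetilde W\bigr)+Q_\perp E_{22}Z_\perp^\top .
\]
Since \(\widetilde W-\widehat W=\widehat Q_\perp\widehat Q_\perp^\top\widetilde W\widehat Z_\perp\widehat Z_\perp^\top\), we get
\[
    R_{\mathrm{tr}}
    =Q_\perp E_{22}Z_\perp^\top-\widehat Q_\perp\widehat Q_\perp^\top\widetilde W\widehat Z_\perp\widehat Z_\perp^\top .
\]
Now expand \(\widehat Q_\perp\widehat Q_\perp^\top=Q_\perp Q_\perp^\top+\Delta_Q\) and similarly on the right, with \(\|\Delta_Q\|\le 2\|\sin\Theta\|\) or better. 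Also use
\[
    Q_\perp Q_\perp^\top\widetilde W Z_\perp Z_\perp^\top=Q_\perp E_{22}Z_\perp^\top .
\]
The cross terms all need second-order size. The terms \(\Delta_Q\widetilde W Z_\perp Z_\perp^\top\) would naively be of order \(\|E\|\cdot\|E\|/\gamma\cdot\|\widetilde W\|\), which is too big because \(\|\widetilde W\|\) can be large. To handle this, I will instead use that
\[
    \widehat Q_\perp^\top\widetilde W=\widehat Q_\perp^\top\widetilde W\,\widehat Z_\perp\widehat Z_\perp^\top
\]
has norm at most \(\sigma_{r+1}(\widetilde W)\le\|E\|\). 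Then \(\widehat Q_\perp^\top\widetilde W\widehat Z_\perp\widehat Z_\perp^\top\) differs from \(\widehat Q_\perp^\top\widetilde W Z_\perp Z_\perp^\top\) by \(\|E\|\cdot O(\|E\|/((1-2c)\gamma))\), and the left-side replacement is handled the same way.

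\textbf{Main obstacle.} The hardest part will be this bookkeeping: ensuring every difference term pairs a \(\sin\Theta\) factor with an \(O(\|E\|)\) factor, namely \(\sigma_{r+1}\) or \(E\) itself, never with \(\Lambda\). Tracking the constants, \(1/(1-2c)\) from Wedin plus bounded multiples from the three or four product terms, should yield a bound of the form \(\frac{a-bc}{1-2c}\frac{\|E\|^2}{\gamma}\). I will then match these to \(12-16c\) by collecting the terms.
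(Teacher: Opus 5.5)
Your argument is correct, but it takes a different route from the paper.

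The paper's route: it sets $F=\widehat W-W$ and gets $\|F\|_2\le 2\|E\|_2$ from Eckart--Young. It then uses the vanishing Schur complement of the rank-$r$ matrix $\widehat W$ to show that the normal block $F_{22}=F_{21}(\Lambda+F_{11})^{-1}F_{12}$ is $O(\|E\|_2^2/\gamma)$. Finally it matches the tangent parts of $F$ and $E$ via $\mathcal P_{T_{\widehat W}}(\widetilde W-\widehat W)=0$ and the Lipschitz estimate $\|\mathcal P_{T_W}-\mathcal P_{T_{\widehat W}}\|_{2\to2}\le 8\|E\|_2/\gamma$. This produces $8+4/(1-2c)=(12-16c)/(1-2c)$.

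Your route bypasses both steps. You write the remainder as the normal part of $E$ minus the truncation residual and telescope in two steps:
\[
R_{\mathrm{tr}}=\bigl(Q_\perp Q_\perp^\top-\widehat Q_\perp\widehat Q_\perp^\top\bigr)EZ_\perp Z_\perp^\top+\widehat Q_\perp\widehat Q_\perp^\top\widetilde W\bigl(Z_\perp Z_\perp^\top-\widehat Z_\perp\widehat Z_\perp^\top\bigr).
\]
The first term uses $\widetilde WZ_\perp=EZ_\perp$, and the second uses $\|\widehat Q_\perp^\top\widetilde W\|_2=\sigma_{r+1}(\widetilde W)\le\|E\|_2$.

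The spectral norm of a difference of rank-$r$ orthogonal projectors equals $\|\sin\Theta\|_2$, so no factor $2$ is needed. Wedin then gives $\|R_{\mathrm{tr}}\|_2\le 2\|E\|_2^2/((1-2c)\gamma)$. The bound $\|\widehat P_Q-P_Q\|_2\le\|\widehat W-W\|_2/\gamma$ used in the paper would instead give $4\|E\|_2^2/\gamma$. Either is already at most $\frac{12-16c}{1-2c}\frac{\|E\|_2^2}{\gamma}$, so there is nothing to ``match''. The skeleton digression can also be dropped.

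A small correction to your bookkeeping remark: the term $\Delta_Q\widetilde WZ_\perp Z_\perp^\top$ is not dangerous, since $WZ_\perp=0$. The genuinely bad term is the doubly perturbed $\Delta_Q\widetilde W\Delta_Z$, which carries a factor $\|W\|_2$. It appears only if you expand both projectors at once, and your two-step telescoping avoids it.

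In short, your route is shorter and gives a sharper constant, at the cost of invoking a $\sin\Theta$ theorem. The paper's split into a normal (Schur-complement) part and a tangent (projector-Lipschitz) part makes explicit the geometric statement that $\widehat W-W$ is tangent to $\mathcal M_r$ up to second order.
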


\subsection{Derivative of the rank-truncated CUR map}

We now apply Lemma~\ref{lem:rank-r-truncation-first-order} to the intersection
matrix
\(
    W(A)=S^\top A P.
\) 
At the base point \(M\), set
\(
    W=S^\top M P.
\) 
By admissibility,
\(
    W=(S^\top U)\Sigma(V^\top P)
\) 
has rank \(r\).

\begin{theorem} 
\label{thm:cur-derivative}
Let
\(
    M=U\Sigma V^\top\in\mathbb R^{m\times n}
\) 
have rank \(r\), and suppose that \((S,P)\) is admissible for \(M\). Define
\(
    \Phi_r(A)
    =
    AP(S^\top A P)_r^\dagger S^\top A.
\) 
Then \(\Phi_r\) is differentiable at \(M\), and for every perturbation
\(E\in\mathbb R^{m\times n}\),
\[
    D\Phi_r(M)[E]
    =
    \Pi_U E+E\Pi_V-\Pi_U E\Pi_V
    =
    \mathcal I_{T_M}^{S,P}(E).
\]
Consequently, for all sufficiently small \(E\),
\[
    \Phi_r(M+E)
    =
    M+\mathcal I_{T_M}^{S,P}(E)+O(\|E\|^2),
\]
where the remainder is measured in any fixed matrix norm.
More explicitly, for any fixed \(0<c<1/2\), the rank-truncation step is
controlled whenever
\(
    \|S^\top E P\|_2
    \le
    c\,\sigma_r(S^\top M P).
\)
The full Taylor expansion is local in \(E\); in particular, the hidden
constant in the \(O(\|E\|^2)\) term depends on \(M\), \(S\), \(P\), the chosen
norm, and the constant \(c\).
\end{theorem}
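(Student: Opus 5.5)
The plan is to expand each factor of \(\Phi_r(M+E)=(M+E)P\,\widehat W^\dagger S^\top(M+E)\) to first order and then collect terms. Here \(\widehat W=(W+S^\top EP)_r\) and \(W=S^\top MP=(S^\top U)\Sigma(V^\top P)\).

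\textbf{Middle factor.} Write \(F:=S^\top EP\), so that \(\|F\|_2\le\|E\|_2\). Under \(\|F\|_2\le c\,\sigma_r(W)\), Lemma~\ref{lem:rank-r-truncation-first-order} gives
\[
\widehat W=W+\mathcal P_{T_W}F+O(\|E\|^2).
\]
Moreover \(\widehat W\) has rank exactly \(r\), since \(\sigma_r(W+F)\ge(1-c)\gamma>0\). Both \(W\) and \(\widehat W\) therefore lie in \(\mathcal M_r\). On a neighborhood of \(W\) in \(\mathcal M_r\) the pseudoinverse is smooth, by the standard formula for the derivative of \(A^\dagger\) along constant-rank curves, or by Wedin's bound \(\|\widehat W^\dagger-W^\dagger\|\lesssim\|W^\dagger\|\|\widehat W^\dagger\|\|\widehat W-W\|\) combined with a second-order refinement. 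Hence
\[
\widehat W^\dagger=W^\dagger+D(W^\dagger)[\mathcal P_{T_W}F]+O(\|E\|^2),
\]
with
\[
D(W^\dagger)[H]=-W^\dagger HW^\dagger+W^\dagger W^{\dagger\top}H^\top(I-WW^\dagger)+(I-W^\dagger W)H^\top W^{\dagger\top}W^\dagger .
\]

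\textbf{Collecting terms.} Multiplying out gives
\[
\Phi_r(M+E)=MPW^\dagger S^\top M+\bigl[EPW^\dagger S^\top M+MPW^\dagger S^\top E+MP\,D(W^\dagger)[\mathcal P_{T_W}F]\,S^\top M\bigr]+O(\|E\|^2).
\]
The zeroth-order term equals \(M\) by Lemma~\ref{lem:exactness}. To identify the linear terms, write \(A:=S^\top U\) (full column rank) and \(B:=V^\top P\) (full row rank). Then \(W=A\Sigma B\) and \(W^\dagger=B^\dagger\Sigma^{-1}A^\dagger\), using the full-rank factorization identity. Consequently:
\begin{itemize}
\item \(MPW^\dagger=U\Sigma BB^\dagger\Sigma^{-1}A^\dagger=UA^\dagger\), so \(MPW^\dagger S^\top E=\Pi_UE\);
\item symmetrically, \(W^\dagger S^\top M=B^\dagger V^\top\), so \(EPW^\dagger S^\top M=E\Pi_V\).
\end{itemize}

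For the derivative term, the two correction terms in \(D(W^\dagger)\) vanish after sandwiching. Indeed \(MP(I-W^\dagger W)=0\), because the row space of \(MP=U\Sigma B\) equals that of \(W\). Likewise \((I-WW^\dagger)S^\top M=0\). What remains is
\[
-MPW^\dagger(\mathcal P_{T_W}F)W^\dagger S^\top M=-UA^\dagger(\mathcal P_{T_W}F)B^\dagger V^\top .
\]
Since \(\range(Q)=\range(A)\) and \(\range(Z)=\range(B^\top)\), we have \(A^\dagger QQ^\top=A^\dagger\) and \(ZZ^\top B^\dagger=B^\dagger\). Hence \(A^\dagger(\mathcal P_{T_W}F)B^\dagger=A^\dagger FB^\dagger\), and the term becomes \(-UA^\dagger S^\top EPB^\dagger V^\top=-\Pi_UE\Pi_V\). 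Summing the three linear terms yields \(\mathcal I_{T_M}^{S,P}(E)\).

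\textbf{Remainder.} The remainder collects three kinds of pieces:
\begin{itemize}
\item the products of the two first-order \(E\)-factors with the first-order change of \(\widehat W^\dagger\), which are quadratic;
\item the truncation remainder, which is bounded by Lemma~\ref{lem:rank-r-truncation-first-order};
\item the second-order pseudoinverse remainder.
\end{itemize}
Each is \(O(\|E\|^2)\), with constants depending on \(\|M\|\), \(\|W^\dagger\|=1/\gamma\), \(c\), and the norm equivalence constants. Differentiability then follows from Definition~\ref{def:frechet-derivative}.

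\textbf{Main obstacle.} The hardest step is the quantitative control of \(\widehat W^\dagger-W^\dagger-D(W^\dagger)[\widehat W-W]\) on the fixed-rank manifold. The approach is to use that \(\sigma_r(\widehat W)\ge(1-c)\gamma\) uniformly, which makes \(X\mapsto X^\dagger\) smooth (indeed real-analytic) on \(\{X\in\mathcal M_r:\sigma_r(X)\ge(1-c)\gamma\}\). The bound can be obtained either by a Taylor estimate along the path of a local chart, or directly from the identity \(\widehat W^\dagger-W^\dagger=-\widehat W^\dagger(\widehat W-W)W^\dagger+(\text{projector-difference terms})\), iterated once.
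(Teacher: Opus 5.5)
Your proposal is correct and follows the paper's proof essentially step for step. Both use the truncation lemma for the middle factor and the fixed-rank pseudoinverse derivative, whose two correction terms are killed by \((I-WW^\dagger)S^\top M=0\) and \(MP(I-W^\dagger W)=0\). Both observe that \(\mathcal P_{T_W}\) disappears once sandwiched between the pseudoinverse factors, and both use the full-rank factorization \(W^\dagger=(V^\top P)^\dagger\Sigma^{-1}(S^\top U)^\dagger\) to identify \(\Pi_U\) and \(\Pi_V\).

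The only difference works in your favor. You expand directly in \(E\) rather than along the line \(t\mapsto M+tE\), which gives the uniform \(O(\|E\|^2)\) remainder that Fr\'echet differentiability actually requires. Your Wedin-identity sketch also supplies the second-order pseudoinverse bound, which the paper only asserts via smoothness of \(X\mapsto X^\dagger\) on \(\mathcal M_r\).
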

Here and below, unless otherwise specified, \(O(\|E\|^2)\) denotes a remainder
whose norm in any fixed matrix norm is bounded by \(C\|E\|^2\) for all
sufficiently small \(E\), where \(C\) may depend on \(M,S,P,r\), and on the
chosen norm.
\begin{proof}
Let
\(
    A(t)=M+tE.
\) 
Then
\(    W(t):=S^\top A(t)P
    =
    W+t\Delta,
    \Delta=S^\top E P.
\)
For \(|t|\) sufficiently small, the \(r\)th and \((r+1)\)st singular values of
\(W+t\Delta\) remain separated. Indeed, by Weyl's inequality,
\[
    \sigma_r(W+t\Delta)
    \ge
    \sigma_r(W)-|t|\|\Delta\|_2,
    \qquad
    \sigma_{r+1}(W+t\Delta)
    \le
    |t|\|\Delta\|_2.
\]
Thus, for \(|t|\|\Delta\|_2<\sigma_r(W)/2\), the rank-\(r\) truncation map is
smooth at \(W+t\Delta\). Hence \(W_r(t)=(W+t\Delta)_r\) is a smooth
fixed-rank curve through \(W\). By Lemma~\ref{lem:rank-r-truncation-first-order},
\[
    W_r(t)
    =
    W+tH+O(t^2).
\]
Therefore the velocity of this curve at \(t=0\) is
\[
    \dot W(0)
    =
    \frac{d}{dt}W_r(t)\bigg|_{t=0}
    =
    \mathcal P_{T_W}\Delta.
\]
For brevity, set
\[
    H:=\dot W_r(0)=\mathcal P_{T_W}\Delta .
\]
Since the Moore--Penrose inverse is smooth on the fixed-rank manifold,
\[
    W_r(t)^\dagger
    =
    W^\dagger+tD(W^\dagger)[H]+O(t^2),
\]
where \(D(W^\dagger)[H]\) denotes the fixed-rank pseudoinverse derivative; see
Lemma~\ref{lem:pinv-derivative}.

Expanding the CUR map gives
\[
\begin{aligned}
    \Phi_r(A(t))
    &=
    (MP+tEP)
    \left(
        W^\dagger+tD(W^\dagger)[H]+O(t^2)
    \right)
    (S^\top M+tS^\top E) \\
    &=
    MPW^\dagger S^\top M 
    +t\Big[
        EPW^\dagger S^\top M
        +
        MPD(W^\dagger)[H]S^\top M
        +
        MPW^\dagger S^\top E
    \Big]
    +O(t^2).
\end{aligned}
\]
By Lemma~\ref{lem:exactness},
\(
    MPW^\dagger S^\top M=M.
\)

It remains to simplify the first-order coefficient. The derivative of the
pseudoinverse along the fixed-rank manifold is
\[
\begin{aligned}
D(W^\dagger)[H]
&=
-W^\dagger H W^\dagger
+
W^\dagger W^{\dagger\top}H^\top(I-WW^\dagger) 
+
(I-W^\dagger W)H^\top W^{\dagger\top}W^\dagger .
\end{aligned}
\]
According to Lemma~\ref{lem:cur-pinv-consequence}, we have 
\(
    MPD(W^\dagger)[H]S^\top M
    =
    -MPW^\dagger H W^\dagger S^\top M.
\) 

Now
\(
    H=\mathcal P_{T_W}\Delta
    =
    WW^\dagger\Delta
    +
    \Delta W^\dagger W
    -
    WW^\dagger\Delta W^\dagger W.
\) 
Using \(W^\dagger W W^\dagger=W^\dagger\), we obtain
\(
    W^\dagger H W^\dagger
    =
    W^\dagger\Delta W^\dagger.
\) 
Therefore
\[
    MPD(W^\dagger)[H]S^\top M
    =
    -MPW^\dagger\Delta W^\dagger S^\top M.
\]  
Since \(\Delta=S^\top E P\), this becomes
\(
    MPD(W^\dagger)[H]S^\top M
    =
    -MPW^\dagger S^\top E P W^\dagger S^\top M.
\) 
Hence
\[
\begin{aligned}
    D\Phi_r(M)[E]
    &=
    EPW^\dagger S^\top M
    +
    MPW^\dagger S^\top E  
    -
    MPW^\dagger S^\top E P W^\dagger S^\top M.
\end{aligned}
\]
It remains to identify the two oblique projectors. Put
\[
    A_S=S^\top U,
    \qquad
    B_P=V^\top P.
\]
Then
\(
    W=A_S\Sigma B_P.
\) 
Since \(A_S\) has full column rank and \(B_P\) has full row rank,
\(
    W^\dagger=B_P^\dagger\Sigma^{-1}A_S^\dagger.
\) 
Therefore,
\[
\begin{aligned}
    MPW^\dagger S^\top
    &=
    U\Sigma B_P
    B_P^\dagger\Sigma^{-1}A_S^\dagger S^\top  =
    U(S^\top U)^\dagger S^\top
    =
    \Pi_U.
\end{aligned}
\]
Similarly,
\[
\begin{aligned}
    PW^\dagger S^\top M
    &=
    P B_P^\dagger\Sigma^{-1}A_S^\dagger A_S\Sigma V^\top =
    P(V^\top P)^\dagger V^\top
    =
    \Pi_V.
\end{aligned}
\]
Thus
\[
    EPW^\dagger S^\top M=E\Pi_V, 
    MPW^\dagger S^\top E=\Pi_U E,
~\text{and}~
    MPW^\dagger S^\top E P W^\dagger S^\top M
    =
    \Pi_U E\Pi_V.
\]
Consequently,
\[
    D\Phi_r(M)[E]
    =
    \Pi_U E+E\Pi_V-\Pi_U E\Pi_V
    =
    \mathcal I_{T_M}^{S,P}(E).
\]
The Taylor expansion follows from differentiability.
\end{proof}

Theorem~\ref{thm:cur-derivative} gives two useful first-order expansions: one
for recovering the underlying low-rank matrix \(M\), and one for approximating
the noisy matrix \(A=M+E\).

\begin{corollary} 
\label{cor:recovery-expansion}
Under the assumptions of Theorem~\ref{thm:cur-derivative},
\[
    \Phi_r(M+E)-M
    =
    \mathcal I_{T_M}^{S,P}E
    +
    O(\|E\|^2).
\]
Consequently,
\(
    \|\Phi_r(M+E)-M\|
    \le
    \|\mathcal I_{T_M}^{S,P}E\|+O(\|E\|^2).
\)
\end{corollary}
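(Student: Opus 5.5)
The corollary follows almost immediately from Theorem~\ref{thm:cur-derivative} together with Lemma~\ref{lem:exactness}, so the plan is to rewrite the Taylor expansion and then apply the triangle inequality.

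First, by Lemma~\ref{lem:exactness} we have \(\Phi_r(M)=M\). Theorem~\ref{thm:cur-derivative} gives Fr\'echet differentiability of \(\Phi_r\) at \(M\) with derivative \(\mathcal I_{T_M}^{S,P}\). It also gives the expansion
\(\Phi_r(M+E)=M+\mathcal I_{T_M}^{S,P}(E)+\mathcal R(E)\),
where \(\|\mathcal R(E)\|\le C\|E\|^2\) for all \(E\) in a neighborhood of zero. The neighborhood is, for instance, one where \(\|S^\top EP\|_2\le c\,\sigma_r(S^\top MP)\) with a fixed \(0<c<1/2\), so that Lemma~\ref{lem:rank-r-truncation-first-order} applies. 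Subtracting \(M\) from both sides gives the first display of the corollary directly.

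For the norm bound, I would take norms in the identity \(\Phi_r(M+E)-M=\mathcal I_{T_M}^{S,P}E+\mathcal R(E)\) and use the triangle inequality:
\(\|\Phi_r(M+E)-M\|\le\|\mathcal I_{T_M}^{S,P}E\|+\|\mathcal R(E)\|\le \|\mathcal I_{T_M}^{S,P}E\|+C\|E\|^2\).

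The only point needing care is that the remainder really is quadratic, i.e.\ bounded by \(C\|E\|^2\), rather than merely \(o(\|E\|)\) as Fr\'echet differentiability alone would give. I would justify this by noting that \(\Phi_r\) is smooth (indeed \(C^2\)) in a neighborhood of \(M\). It is a product of the linear maps \(A\mapsto AP\) and \(A\mapsto S^\top A\) with \(A\mapsto (S^\top AP)_r^\dagger\). The latter is the composition of the rank-\(r\) truncation, which is smooth while the spectral gap \(\sigma_r>\sigma_{r+1}\) persists, with the pseudoinverse, which is smooth on the fixed-rank manifold. Taylor's theorem with a uniform bound on the second derivative over a small closed ball then yields the constant \(C\). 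This constant depends on \(M,S,P\), the norm, and \(c\), consistent with the convention stated after Theorem~\ref{thm:cur-derivative}. Since all norms on \(\mathbb R^{m\times n}\) are equivalent, the bound holds in any fixed matrix norm after adjusting \(C\).
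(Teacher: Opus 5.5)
Your proposal is correct and follows the paper's route: the paper's proof simply reads the expansion off the Taylor expansion in Theorem~\ref{thm:cur-derivative} (using \(\Phi_r(M)=M\)), and the norm bound is the triangle inequality you apply. Your added argument that the remainder is genuinely \(O(\|E\|^2)\) is a worthwhile supplement: \(\Phi_r\) is \(C^2\) near \(M\) because the truncation is smooth under the spectral gap and the pseudoinverse is smooth on the fixed-rank manifold, and Taylor's theorem then applies on a small ball. The paper's proof of the theorem only says the expansion ``follows from differentiability,'' which by itself gives only \(o(\|E\|)\).
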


\begin{proof}
This is exactly the Taylor expansion in Theorem~\ref{thm:cur-derivative}.
\end{proof}

\begin{corollary} 
\label{cor:local-residual-expansion}
Under the assumptions of Theorem~\ref{thm:cur-derivative},
\[
    M+E-\Phi_r(M+E)
    =
    \bigl(I-\mathcal I_{T_M}^{S,P}\bigr)E
    +
    O(\|E\|^2).
\]
\end{corollary}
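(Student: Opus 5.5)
The plan is to read this off Corollary~\ref{cor:recovery-expansion}. Write the residual as
\[
    M+E-\Phi_r(M+E)
    =
    E-\bigl(\Phi_r(M+E)-M\bigr).
\]
The first step is to substitute the expansion
\(\Phi_r(M+E)-M=\mathcal I_{T_M}^{S,P}E+R(E)\), where \(\|R(E)\|\le C\|E\|^2\) for all sufficiently small \(E\). Theorem~\ref{thm:cur-derivative} supplies this bound, and its constant \(C\) depends only on \(M,S,P,r\), the norm, and the truncation constant \(c\). The substitution gives
\[
    M+E-\Phi_r(M+E)
    =
    E-\mathcal I_{T_M}^{S,P}E-R(E)
    =
    \bigl(I-\mathcal I_{T_M}^{S,P}\bigr)E-R(E).
\]
Since \(\|-R(E)\|=\|R(E)\|\), the remainder is again \(O(\|E\|^2)\) in the same fixed norm, with the same constant and the same smallness neighborhood. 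This is exactly the claim.

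There is no real obstacle here. The step that needs care is confirming that the validity region carries over, namely that the expansion holds whenever \(\|S^\top EP\|_2\le c\,\sigma_r(S^\top MP)\) and \(\|E\|\) is small. This is immediate, because the residual differs from \(\Phi_r(M+E)-M\) only by the exact linear term \(E\), so no new approximation enters.

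It is also worth noting why the leading term is meaningful. By Lemma~\ref{lem:interp-tangent-projector}, \(\mathcal I_{T_M}^{S,P}\) is idempotent, so \(I-\mathcal I_{T_M}^{S,P}\) is the complementary oblique projector. Its kernel is exactly \(T_M\mathcal M_r\). Hence the first-order residual vanishes for tangent perturbations and keeps only the component of \(E\) complementary to the tangent space in the direction set by the sampling.
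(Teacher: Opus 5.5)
Your proposal is correct and matches the paper's proof: the paper also subtracts the expansion $\Phi_r(M+E)=M+\mathcal I_{T_M}^{S,P}E+O(\|E\|^2)$ from Theorem~\ref{thm:cur-derivative} from $M+E$, and negating the remainder leaves it $O(\|E\|^2)$. Your closing remark, that the kernel of $I-\mathcal I_{T_M}^{S,P}$ is exactly $T_M\mathcal M_r$ by Lemma~\ref{lem:interp-tangent-projector}, is also correct, though the proof does not need it.
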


\begin{proof}
Subtract the expansion
\(
    \Phi_r(M+E)
    =
    M+\mathcal I_{T_M}^{S,P}E+O(\|E\|^2)
\) 
from \(M+E\).
\end{proof}

\begin{corollary} 
\label{cor:sampling-invisible}
Under the assumptions of Theorem~\ref{thm:cur-derivative}, suppose that
\(    S^\top E=0,
    EP=0.
\)
Then
\(
    \Phi_r(M+E)-M
    =
  O(\|E\|^2).
\)
\end{corollary}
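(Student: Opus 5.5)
The plan is to combine Corollary~\ref{cor:recovery-expansion} with a direct computation showing that \(\mathcal I_{T_M}^{S,P}E=0\) whenever \(S^\top E=0\) and \(EP=0\). Once that identity holds, the expansion
\[
    \Phi_r(M+E)-M=\mathcal I_{T_M}^{S,P}E+O(\|E\|^2)
\]
reduces immediately to \(\Phi_r(M+E)-M=O(\|E\|^2)\).

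For the computation, I would use the factored forms of the two oblique projectors. First, \(\Pi_U E=U(S^\top U)^\dagger S^\top E\), and this vanishes because \(S^\top E=0\). Second, \(E\Pi_V=EP(V^\top P)^\dagger V^\top\), and this vanishes because \(EP=0\). The cross term \(\Pi_U E\Pi_V\) vanishes for either reason. Summing the three terms gives \(\mathcal I_{T_M}^{S,P}E=0\).

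There is one subtlety to address, which I expect to be the only real obstacle. The \(O(\|E\|^2)\) term in Theorem~\ref{thm:cur-derivative} is a local statement: it holds for all sufficiently small \(E\), with a constant depending only on \(M,S,P\), the chosen norm and \(c\). So the corollary should be read in the same local sense, for sampling-invisible perturbations \(E\to 0\).

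In that regime the rank-truncation hypothesis is automatic. Since \(S^\top E=0\), we have \(S^\top EP=0\), so \(\|S^\top EP\|_2\le c\,\sigma_r(S^\top MP)\) holds trivially. In fact \(W(M+E)=W(M)\), so the middle factor is unchanged.

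This also suggests an exact check of the remainder:
\[
    \Phi_r(M+E)=(MP)\,W^\dagger\,(S^\top M)=M
\]
by Lemma~\ref{lem:exactness}, because \(EP=0\) and \(S^\top E=0\) leave the selected columns and rows unchanged. So the remainder is in fact zero. I would note this as a remark, while presenting the argument through the general expansion as the statement suggests.
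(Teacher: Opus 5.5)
Your proposal is correct and follows the paper's proof: the factored forms give \(\Pi_U E=0\) and \(E\Pi_V=0\), hence \(\mathcal I_{T_M}^{S,P}E=0\), and Corollary~\ref{cor:recovery-expansion} finishes the argument. Your closing remark is also correct and strictly stronger than the corollary. Because the sampled columns, rows and intersection are unchanged, \(\Phi_r(M+E)=M\) holds exactly and for every such \(E\), not only small ones, which is the mechanism in Example~\ref{ex:cur-better-than-svd}.
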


\begin{proof}
Since
\[
    \Pi_U=U(S^\top U)^\dagger S^\top,
    \qquad
    \Pi_V=P(V^\top P)^\dagger V^\top,
\]
the assumptions \(S^\top E=0\) and \(EP=0\) imply
\[
    \Pi_U E=0,
    \qquad
    E\Pi_V=0.
\]
Therefore
\[
    \mathcal I_{T_M}^{S,P}E
    =
    \Pi_U E+E\Pi_V-\Pi_U E\Pi_V
    =
    0.
\]
The result follows from Corollary~\ref{cor:recovery-expansion}.
\end{proof}

\section{Interpretation and comparison}
\label{sec:comparison}

Theorem~\ref{thm:cur-derivative} identifies the first-order perturbation
retained by the rank-truncated CUR map:
\(
    \Phi_r(M+E)-M
    =
    \mathcal I_{T_M}^{S,P}E+O(\|E\|^2).
\) 
This section discusses what this expansion says about CUR recovery and how it
compares with the classical SVD truncation.

\subsection{A local condition for accurate CUR recovery}

The recovery expansion shows that the first-order CUR error is governed by
\[
    \mathcal I_{T_M}^{S,P}E
    =
    \Pi_U E+E\Pi_V-\Pi_U E\Pi_V.
\]
Thus CUR is locally accurate for estimating the underlying low-rank matrix
\(M\) when
\(
    \|\mathcal I_{T_M}^{S,P}E\|
\) 
is small. This condition depends on both the perturbation \(E\) and the
selected rows and columns. It is therefore more directional than a norm-only
condition involving \(\|E\|\).

A particularly simple favorable case occurs when the perturbation is invisible
to the selected rows and columns:
\(
    S^\top E=0, 
    EP=0.
\)
Then
\(
    \Pi_U E=0, 
    E\Pi_V=0,
\)
and hence
\(
    \mathcal I_{T_M}^{S,P}E=0.
\) 
Therefore, by Corollary~\ref{cor:sampling-invisible},
\(
    \Phi_r(M+E)-M=O(\|E\|^2).
\) 
This explains why CUR can be effective when the sampled rows and columns are less corrupted than the rest of the matrix.

More generally, the expansion suggests that CUR is favorable when the
perturbation is small in the sampling-induced tangent directions. The full
perturbation \(E\) may be large in directions not seen by the selected rows
and columns, but those directions do not necessarily contribute to the leading
CUR recovery error.

\subsection{Comparison with SVD truncation}

The corresponding first-order expansion for the rank-\(r\) SVD truncation is
classical. Since the truncated SVD is the local metric projection onto the
rank-\(r\) manifold, one has
\(
    (M+E)_r-M
    =
    \mathcal P_{T_M}E+O(\|E\|^2),
\) 
where
\[
    \mathcal P_{T_M}E
    =
    UU^\top E+EVV^\top-UU^\top EVV^\top.
\]
This follows, for example, from the metric-projection interpretation of the
truncated SVD \cite[Theorem~9.1 and Section~9.2.4]{uschmajew2020geometric}
and the general smooth projection result
\cite[Lemma~2.1]{lewis2008alternating}.

Thus SVD and CUR retain different first-order components of the perturbation:
\[
    \text{SVD retains } \mathcal P_{T_M}E,
    \qquad
    \text{CUR retains } \mathcal I_{T_M}^{S,P}E.
\]
Consequently, SVD removes orthogonal-normal perturbations to first order, whereas
rank-truncated CUR removes perturbations in the kernel of the
sampling-induced oblique tangent projector. Neither method is uniformly
better. Their local behavior depends on the geometry of the perturbation.

The following example shows a simple case where CUR removes the perturbation
to first order, while SVD does not.

\begin{example} 
\label{ex:cur-better-than-svd}
Let
\[
    u=v=\frac{1}{\sqrt 3}
    \begin{bmatrix}
    1\\1\\1
    \end{bmatrix},
    \qquad
    M=uv^\top
    =
    \frac13
    \begin{bmatrix}
    1&1&1\\
    1&1&1\\
    1&1&1
    \end{bmatrix}.
\]
Then \(\rank(M)=1\). Choose the first row and first column,
\(
    S=e_1, 
    P=e_1.
\)
This sampling is admissible because
\(
    S^\top u=\frac{1}{\sqrt 3}\neq0,
    v^\top P=\frac{1}{\sqrt 3}\neq0.
\) Let
\[
    E=e_3e_3^\top
    =
    \begin{bmatrix}
    0&0&0\\
    0&0&0\\
    0&0&1
    \end{bmatrix},
    \qquad
    A_\varepsilon=M+\varepsilon E.
\]
Since
\(
    S^\top E=0, 
    EP=0,
\) 
we have
\[
    A_\varepsilon P=MP,
    \qquad
    S^\top A_\varepsilon=S^\top M,
    \qquad
    S^\top A_\varepsilon P=S^\top M P.
\]
Therefore
\[
\begin{aligned}
    \Phi_1(A_\varepsilon)
    =
    A_\varepsilon P(S^\top A_\varepsilon P)_1^\dagger S^\top A_\varepsilon =
    MP(S^\top M P)^\dagger S^\top M =
    M.
\end{aligned}
\]
Hence
\[
    \|\Phi_1(A_\varepsilon)-M\|_F=0.
\]

 By contrast, consider the SVD truncation with \(\varepsilon=2/3\). Then
\[
    A_{2/3}
    =
    M+\frac23 e_3e_3^\top
    =
    \begin{bmatrix}
    \frac13&\frac13&\frac13\\
    \frac13&\frac13&\frac13\\
    \frac13&\frac13&1
    \end{bmatrix}.
\]
Since \(A_{2/3}\) is symmetric positive semidefinite, its best rank-one SVD
truncation is its leading eigenvalue-eigenvector approximation. A direct
calculation gives
\[
    \lambda_1=\frac43,
    \qquad
    q=\frac{1}{\sqrt6}
    \begin{bmatrix}
    1\\1\\2
    \end{bmatrix}.
\]
Therefore
\[
    (A_{2/3})_1
    =
    \lambda_1 qq^\top
    =
    \frac43\cdot \frac16
    \begin{bmatrix}
    1&1&2\\
    1&1&2\\
    2&2&4
    \end{bmatrix}
    =
    \begin{bmatrix}
    \frac29&\frac29&\frac49\\
    \frac29&\frac29&\frac49\\
    \frac49&\frac49&\frac89
    \end{bmatrix}.
\]
Thus
\[
    (A_{2/3})_1-M
    =
    \begin{bmatrix}
    -\frac19&-\frac19&\frac19\\
    -\frac19&-\frac19&\frac19\\
    \frac19&\frac19&\frac59
    \end{bmatrix},
\]
and hence
\[
    \|(A_{2/3})_1-M\|_F
    =
    \frac{\sqrt{33}}{9}.
\]
In this example, for recovery of the underlying matrix \(M\), CUR removes the
perturbation exactly, while the SVD truncation has nonzero recovery error.
\end{example}

\subsection{Effect of sampling obliqueness}

The difference between \(\mathcal I_{T_M}^{S,P}\) and \(\mathcal P_{T_M}\)
comes from the difference between the oblique sampling projectors
\(\Pi_U,\Pi_V\) and the orthogonal projectors \(UU^\top,VV^\top\). The next
estimate makes this precise.
\begin{proposition} 
\label{prop:compare-orthogonal-projector}
Let
\[
    \mathsf P_U=UU^\top,
    \qquad
    \mathsf P_V=VV^\top,
\]
and define
\[
    \delta_U=\|\Pi_U-\mathsf P_U\|_2,
    \qquad
    \delta_V=\|\Pi_V-\mathsf P_V\|_2.
\]
Then, for every \(E\in\mathbb R^{m\times n}\),
\[
\begin{aligned}
    \|\mathcal I_{T_M}^{S,P}E-\mathcal P_{T_M}E\|_2
    &\le
    \left(
        \delta_U(1+\|\Pi_V\|_2)
        +
        \delta_V(1+\|\mathsf P_U\|_2)
    \right)\|E\|_2  \\
    &=
    \left(
        \delta_U(1+\|\Pi_V\|_2)
        +
        2\delta_V
    \right)\|E\|_2.
\end{aligned}
\]
Consequently,
\[
    \Phi_r(M+E)-M
    =
    \mathcal P_{T_M}E
    +
    O((\delta_U+\delta_V)\|E\|)
    +
    O(\|E\|^2),
\]
where the constant in the first-order obliqueness term depends on
\(\|\Pi_V\|_2\).
\end{proposition}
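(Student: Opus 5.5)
The plan is to write the difference \(\mathcal I_{T_M}^{S,P}E-\mathcal P_{T_M}E\) as a sum of terms, each containing exactly one factor \(\Pi_U-\mathsf P_U\) or \(\Pi_V-\mathsf P_V\). Then submultiplicativity of the spectral norm gives the bound directly. Subtracting the two inclusion--exclusion formulas from Definition~\ref{def:interp-tangent-projector} and the formula for \(\mathcal P_{T_M}\) gives
\[
    \mathcal I_{T_M}^{S,P}E-\mathcal P_{T_M}E
    =
    (\Pi_U-\mathsf P_U)E+E(\Pi_V-\mathsf P_V)
    -\bigl(\Pi_U E\Pi_V-\mathsf P_U E\mathsf P_V\bigr).
\]
The only term needing care is the mixed one. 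I will telescope it by adding and subtracting \(\mathsf P_U E\Pi_V\):
\[
    \Pi_U E\Pi_V-\mathsf P_U E\mathsf P_V
    =
    (\Pi_U-\mathsf P_U)E\Pi_V+\mathsf P_U E(\Pi_V-\mathsf P_V).
\]
This choice of intermediate term is what produces the asymmetric constant \(\|\Pi_V\|_2\) paired with \(\delta_U\), and \(\|\mathsf P_U\|_2\) paired with \(\delta_V\), as in the statement.

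Substituting and grouping, the difference becomes
\[
    (\Pi_U-\mathsf P_U)E(I-\Pi_V)+(I-\mathsf P_U)E(\Pi_V-\mathsf P_V).
\]
Applying the triangle inequality to the four terms before regrouping, that is, bounding \(\|I-\Pi_V\|_2\le 1+\|\Pi_V\|_2\) and \(\|I-\mathsf P_U\|_2\le 1+\|\mathsf P_U\|_2\), yields the first displayed inequality. The equality form follows because \(\mathsf P_U=UU^\top\) is a nonzero orthogonal projector (\(r\ge1\)), so \(\|\mathsf P_U\|_2=1\).

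For the consequence, I will invoke Corollary~\ref{cor:recovery-expansion}:
\[
    \Phi_r(M+E)-M=\mathcal I_{T_M}^{S,P}E+O(\|E\|^2).
\]
I then write \(\mathcal I_{T_M}^{S,P}E=\mathcal P_{T_M}E+(\mathcal I_{T_M}^{S,P}E-\mathcal P_{T_M}E)\) and apply the bound just proved. This gives a middle term of norm at most \((1+\|\Pi_V\|_2)(\delta_U+\delta_V)\|E\|_2\), using \(2\le 1+\|\Pi_V\|_2\). This last inequality holds because \(\Pi_V\) is a nonzero idempotent, so \(\|\Pi_V\|_2\ge1\).

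All steps are elementary, so there is no real obstacle. The only point to watch is choosing the telescoping in the mixed term so that the constants match the stated ones, rather than the symmetric variant that would pair \(\|\Pi_U\|_2\) with \(\delta_V\).
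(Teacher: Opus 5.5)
Your proposal is correct and follows the paper's proof essentially verbatim: you subtract the two inclusion--exclusion formulas, telescope the mixed term through $\mathsf P_U E\Pi_V$, apply the four-term triangle inequality with $\|\mathsf P_U\|_2=1$, and read off the consequence from Corollary~\ref{cor:recovery-expansion}. Your extra observation that $\Pi_V$ is a nonzero idempotent, so that $\|\Pi_V\|_2\ge 1$, simply makes explicit the paper's ``constant depending on $\|\Pi_V\|_2$''.
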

\begin{proof}
Recall that
\(
    \mathcal I_{T_M}^{S,P}E
    =
    \Pi_U E+E\Pi_V-\Pi_U E\Pi_V,
\)
whereas
\(
    \mathcal P_{T_M}E
    =
    \mathsf P_U E+E \mathsf P_V-\mathsf P_U E \mathsf P_V.
\) 
Subtracting gives
\[
\begin{aligned}
    \mathcal I_{T_M}^{S,P}E-\mathcal P_{T_M}E
    &=
    (\Pi_U-\mathsf P_U)E
    +
    E(\Pi_V-\mathsf P_V)  
    -
    \left(\Pi_U E\Pi_V-\mathsf P_U E \mathsf P_V\right).
\end{aligned}
\]
For the product term,
\[
    \Pi_U E\Pi_V-\mathsf P_U E \mathsf P_V
    =
    (\Pi_U-\mathsf P_U)E\Pi_V
    +
    \mathsf P_U E(\Pi_V-\mathsf P_V).
\]
Therefore
\[
\begin{aligned}
    \|\mathcal I_{T_M}^{S,P}E-\mathcal P_{T_M}E\|_2
    &\le
    \delta_U\|E\|_2
    +
    \delta_V\|E\|_2   
    +
    \delta_U\|E\|_2\|\Pi_V\|_2
    +
    \|\mathsf P_U\|_2\|E\|_2\delta_V.
\end{aligned}
\]
Since \(\|\mathsf P_U\|_2=1\), the stated bound follows.

The final expansion follows from
\[
    \Phi_r(M+E)-M
    =
    \mathcal I_{T_M}^{S,P}E+O(\|E\|^2).
\]
\end{proof}

This estimate shows that when the selected rows and columns induce oblique
projectors close to the orthogonal projectors, the CUR first-order behavior is
close to the SVD first-order behavior. When the obliqueness is large, CUR may
retain a very different component of the perturbation.

\subsection{Relation to norm-only perturbation bounds}

Existing CUR perturbation bounds typically provide estimates of the form
\(
    \|\operatorname{CUR}(M+E)-M\|
    \le
    C(M,S,P)\|E\|+ \text{higher-order terms},
\) 
where the constant depends on the conditioning of the selected rows and columns. Such bounds are important for finite-noise stability.

The expansion in this paper gives a complementary local viewpoint. It
identifies the leading coefficient of the \(O(\|E\|)\) term:
\[
    \Phi_r(M+E)-M
    =
    \mathcal I_{T_M}^{S,P}E+O(\|E\|^2).
\]
Thus the leading error may be much smaller than a norm-only estimate suggests
when \(\mathcal I_{T_M}^{S,P}E\) is small. In particular, perturbations of the
same norm can lead to different first-order CUR recovery errors depending on
how they interact with the selected rows and columns.
\section{Numerical experiments}
\label{sec:numerics}

We present numerical experiments to illustrate the local perturbation
expansion
\[
    \Phi_r(M+E)-M
    =
    \mathcal I_{T_M}^{S,P}E+O(\|E\|^2).
\]
The goal is to verify that \(\mathcal I_{T_M}^{S,P}E\) gives the correct
first-order characterization of the rank-truncated CUR recovery error. We also
compare this behavior with the classical first-order expansion of the
rank-\(r\) SVD truncation,
\[
    (M+E)_r-M
    =
    \mathcal P_{T_M}E+O(\|E\|^2).
\]
Thus CUR and SVD are compared through the two first-order quantities
\[
    \mathcal I_{T_M}^{S,P}E
    \qquad\text{and}\qquad
    \mathcal P_{T_M}E.
\]

Unless otherwise stated, we use
\(
    m=80,
    n=70,
\) 
and assume that the target rank \(r=5\) is given. We generate a rank-\(r\)
matrix
\(
    M=U\Sigma V^\top\in\mathbb R^{m\times n},
\)
where \(U\in\mathbb R^{m\times r}\) and
\(V\in\mathbb R^{n\times r}\) are obtained by orthonormalizing Gaussian random
matrices. The diagonal matrix \(\Sigma\) contains positive singular values. A
fixed decreasing set of singular values is used only to make the test matrix
reproducible.

The row and column samples are chosen using leverage scores. The row leverage
scores are
\[
    \ell_i^{(U)}=\|e_i^\top U\|_2^2,
    \qquad i=1,\ldots,m,
\]
and the column leverage scores are
\[
    \ell_j^{(V)}=\|e_j^\top V\|_2^2,
    \qquad j=1,\ldots,n.
\]
We select \(s=2r\) rows and \(c=2r\) columns corresponding to the largest row
and column leverage scores. The corresponding selection matrices
\(S\in\mathbb R^{m\times s}\) and \(P\in\mathbb R^{n\times c}\) are formed
from the selected coordinate vectors. We check numerically that
\[
    \rank(S^\top U)=r,
    \qquad
    \rank(V^\top P)=r,
\]
so that the sampling pair is admissible.

All perturbations are generated from Gaussian random matrices and then
normalized to have Frobenius norm one. For a generic perturbation, we take
\[
    E=\frac{G}{\|G\|_{\mathrm F}},
    \qquad
    G_{ij}\sim N(0,1).
\]
For sampling-invisible perturbations, we take
\[
    E=
    \frac{(I-SS^\top)G(I-PP^\top)}
    {\|(I-SS^\top)G(I-PP^\top)\|_{\mathrm F}},
\]
which ensures
\[
    S^\top E=0,
    \qquad
    EP=0.
\]
This represents an idealized setting in which the selected rows and columns
are uncorrupted. For orthogonal-normal perturbations, we take
\[
    E=
    \frac{(I-UU^\top)G(I-VV^\top)}
    {\|(I-UU^\top)G(I-VV^\top)\|_{\mathrm F}},
\]
which ensures
\(
    \mathcal P_{T_M}E=0.
\)

For each perturbation size
\(
    \varepsilon\in\{10^{-8},10^{-7.5},\ldots,10^{-1}\},
\)
we form
\(
    A_\varepsilon=M+\varepsilon E,
\)
and compute
\[
    \Phi_r(A_\varepsilon)
    =
    A_\varepsilon P(S^\top A_\varepsilon P)_r^\dagger S^\top A_\varepsilon
\]
and the rank-\(r\) SVD approximation \((A_\varepsilon)_r\). Note that the truncated pseudoinverse \((S^\top A_\varepsilon P)_r^\dagger\) is computed
by taking the SVD of \(S^\top A_\varepsilon P\), retaining the largest \(r\)
singular values, and inverting only those retained singular values.

\vspace{2mm}
\noindent\textbf{Observed errors and first-order predictions.}
For recovery of the underlying low-rank matrix \(M\), we report the   Frobenius errors
\[
    e_{\mathrm{CUR}}(\varepsilon)
    =
    \|\Phi_r(M+\varepsilon E)-M\|_{\mathrm F},
~\text{and}~
    e_{\mathrm{SVD}}(\varepsilon)
    =
    \|(M+\varepsilon E)_r-M\|_{\mathrm F}.
\]
The corresponding first-order theoretical predictions are
\[
    e_{\mathrm{CUR}}^{(1)}(\varepsilon)
    =
    \varepsilon
    \|\mathcal I_{T_M}^{S,P}E\|_{\mathrm F},\text{~
and~}
    e_{\mathrm{SVD}}^{(1)}(\varepsilon)
    =
    \varepsilon
    \|\mathcal P_{T_M}E\|_{\mathrm F}.
\]
Thus, when the corresponding first-order term is nonzero, we expect
\[
    \frac{e_{\mathrm{CUR}}(\varepsilon)}
    {e_{\mathrm{CUR}}^{(1)}(\varepsilon)}
    \to 1,
    \qquad
    \frac{e_{\mathrm{SVD}}(\varepsilon)}
    {e_{\mathrm{SVD}}^{(1)}(\varepsilon)}
    \to 1
    \qquad
    \text{as }\varepsilon\to0.
\]

Figure~\ref{fig:generic-prediction} shows the result for a generic Gaussian
perturbation. Both first-order terms are nonzero. The observed CUR recovery
error agrees with
\(\varepsilon\|\mathcal I_{T_M}^{S,P}E\|_{\mathrm F}\), and the observed SVD
recovery error agrees with
\(\varepsilon\|\mathcal P_{T_M}E\|_{\mathrm F}\) for small \(\varepsilon\).
This confirms that the local expansion captures both the asymptotic rate and
the leading coefficient of the recovery error.

\begin{figure}[t]
\centering
\includegraphics[width=0.62\textwidth]{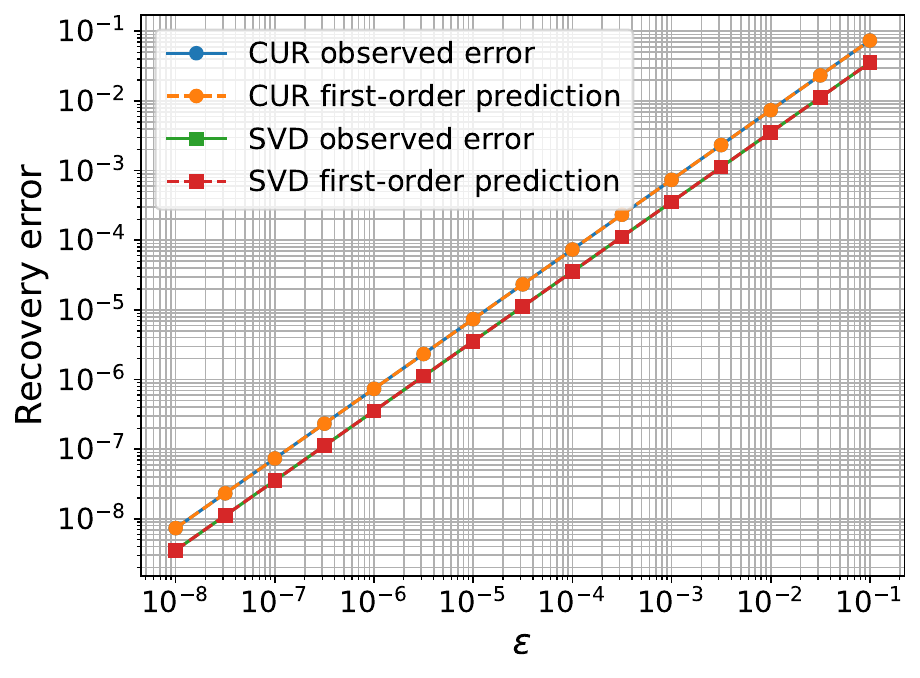}
\caption{
Generic perturbation. The observed CUR and SVD recovery errors are compared
with their first-order theoretical predictions. The CUR error follows
\(\varepsilon\|\mathcal I_{T_M}^{S,P}E\|_{\mathrm F}\), while the SVD error
follows \(\varepsilon\|\mathcal P_{T_M}E\|_{\mathrm F}\) for small
\(\varepsilon\).
}
\label{fig:generic-prediction}
\end{figure}

\vspace{2mm}
\noindent\textbf{Sampling-invisible and orthogonal-normal perturbations.}
We next compare two structured perturbation regimes. First, we generate
\(E\) so that
\(
    S^\top E=0,    EP=0.
\)
Then
\(
    \mathcal I_{T_M}^{S,P}E=0.
\) 
Hence the CUR first-order prediction vanishes, and the theory predicts
\[
    \|\Phi_r(M+\varepsilon E)-M\|_{\mathrm F}
    =
    O(\varepsilon^2).
\]
On the other hand, \(\mathcal P_{T_M}E\) is generally nonzero, so the SVD
recovery error is first order.

Second, we generate \(E\) so that
\[
    \mathcal P_{T_M}E=0.
\]
Then the SVD first-order prediction vanishes, and
\[
    \|(M+\varepsilon E)_r-M\|_{\mathrm F}
    =
    O(\varepsilon^2).
\]
However, since \(\mathcal I_{T_M}^{S,P}\) is generally an oblique tangent
projector, \(\mathcal I_{T_M}^{S,P}E\) need not vanish, so CUR may have a
first-order recovery error.

Figure~\ref{fig:cur-vs-svd-structured} shows both regimes. In panel (a), CUR
has second-order recovery error while SVD has first-order recovery error. In
panel (b), the roles are reversed. This confirms that CUR and SVD suppress
different first-order components of the perturbation.

\begin{figure}[t]
\centering
\begin{minipage}{0.48\textwidth}
    \centering
    \includegraphics[width=\textwidth]{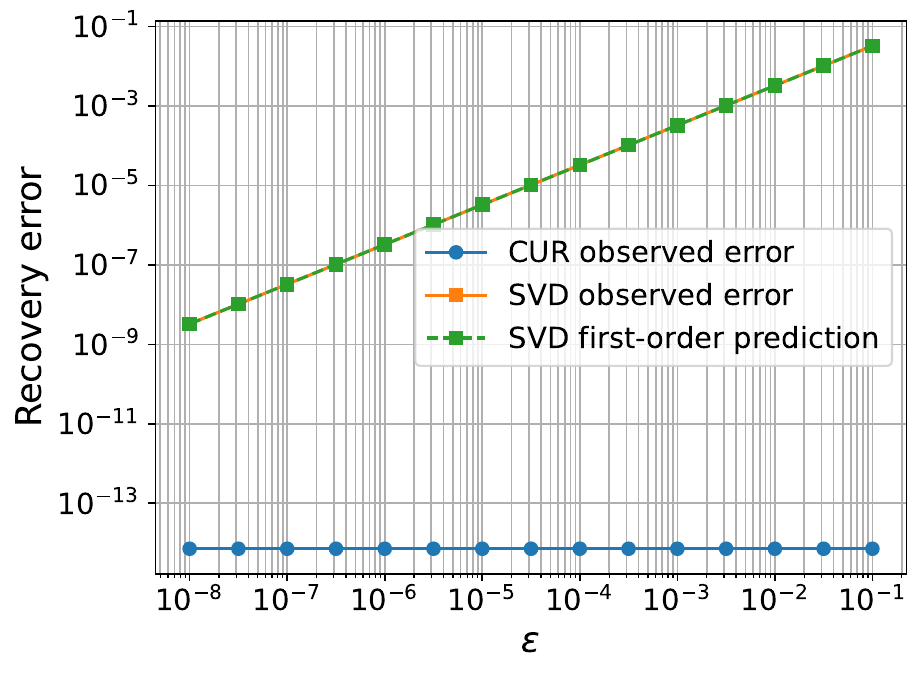}

    \vspace{0.3em}
    \textbf{(a)} Sampling-invisible perturbation.
\end{minipage}
\hfill
\begin{minipage}{0.48\textwidth}
    \centering
    \includegraphics[width=\textwidth]{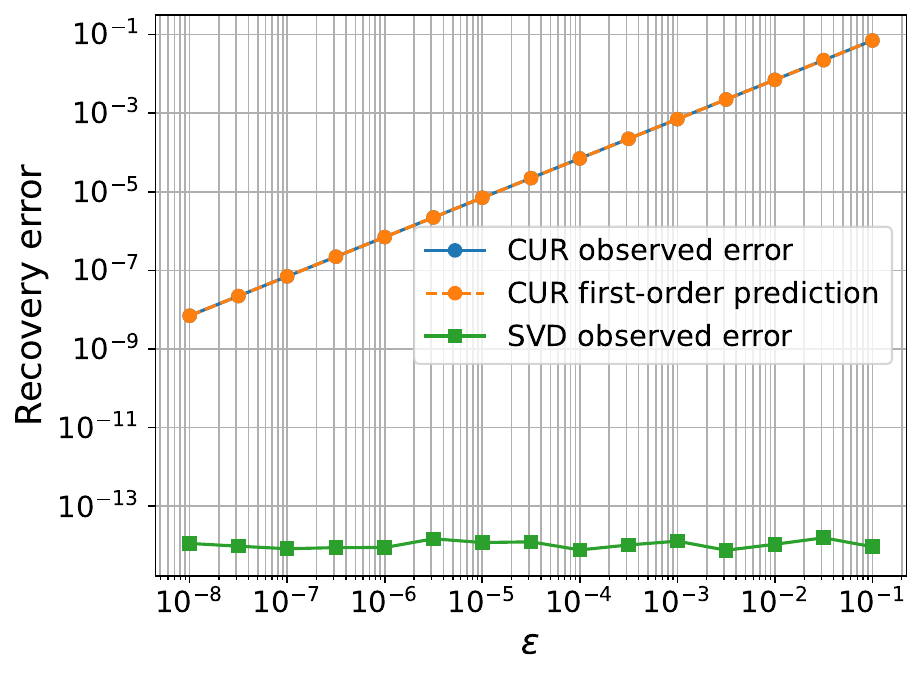}

    \vspace{0.3em}
    \textbf{(b)} Orthogonal-normal perturbation.
\end{minipage}

\caption{
Comparison between rank-truncated CUR and rank-\(r\) SVD truncation under two
structured perturbations. In panel (a), the perturbation satisfies
\(S^\top E=0\) and \(EP=0\), so
\(\mathcal I_{T_M}^{S,P}E=0\). The CUR first-order recovery error vanishes,
while the SVD error is first order. In panel (b), the perturbation satisfies
\(\mathcal P_{T_M}E=0\). The SVD first-order recovery error vanishes, while
the CUR error is generally first order. These two experiments show that CUR
and SVD suppress different first-order components of the perturbation.
}
\label{fig:cur-vs-svd-structured}
\end{figure}

\vspace{2mm}
\noindent\textbf{Gradually visible perturbations.}
The sampling-invisible experiment is intentionally idealized. We now test a
more gradual version of the same mechanism. Let \(G\) be a Gaussian matrix and
define
\[
    E_{\mathrm{inv}}
    =
    (I-SS^\top)G(I-PP^\top),
    \qquad
    E_{\mathrm{vis}}
    =
    G-E_{\mathrm{inv}}.
\]
We normalize these two components separately:
\[
    \widehat E_{\mathrm{inv}}
    =
    \frac{E_{\mathrm{inv}}}{\|E_{\mathrm{inv}}\|_{\mathrm F}},
    \qquad
    \widehat E_{\mathrm{vis}}
    =
    \frac{E_{\mathrm{vis}}}{\|E_{\mathrm{vis}}\|_{\mathrm F}}.
\]
For
\(
    \alpha\in\{10^{-3},10^{-2.8},\ldots,10^1\},
\)
we define
\(
    E_\alpha
    =
    \frac{
    \widehat E_{\mathrm{inv}}
    +
    \alpha \widehat E_{\mathrm{vis}}
    }
    {
    \|\widehat E_{\mathrm{inv}}
    +
    \alpha \widehat E_{\mathrm{vis}}\|_{\mathrm F}
    }.
\)
The parameter \(\alpha\) controls how much of the perturbation is visible
through the selected rows and columns. 
For three fixed perturbation sizes,
\(
    \varepsilon=10^{-6},  10^{-5}, 10^{-4},
\)
we compare the observed CUR recovery error
\[
    \|\Phi_r(M+\varepsilon E_\alpha)-M\|_{\mathrm F}
\]
with the first-order prediction
\[
    \varepsilon
    \|\mathcal I_{T_M}^{S,P}E_\alpha\|_{\mathrm F}.
\]
Figure~\ref{fig:visibility} shows that, for each \(\varepsilon\), the observed
CUR recovery error follows the first-order prediction as \(\alpha\) varies.
The plateau for large \(\alpha\) occurs because \(E_\alpha\) approaches the
normalized visible component. This supports the main interpretation of the
theory: CUR is accurate when the perturbation is small in the sampled tangent
directions.

\begin{figure}[t]
\centering
\includegraphics[width=0.62\textwidth]{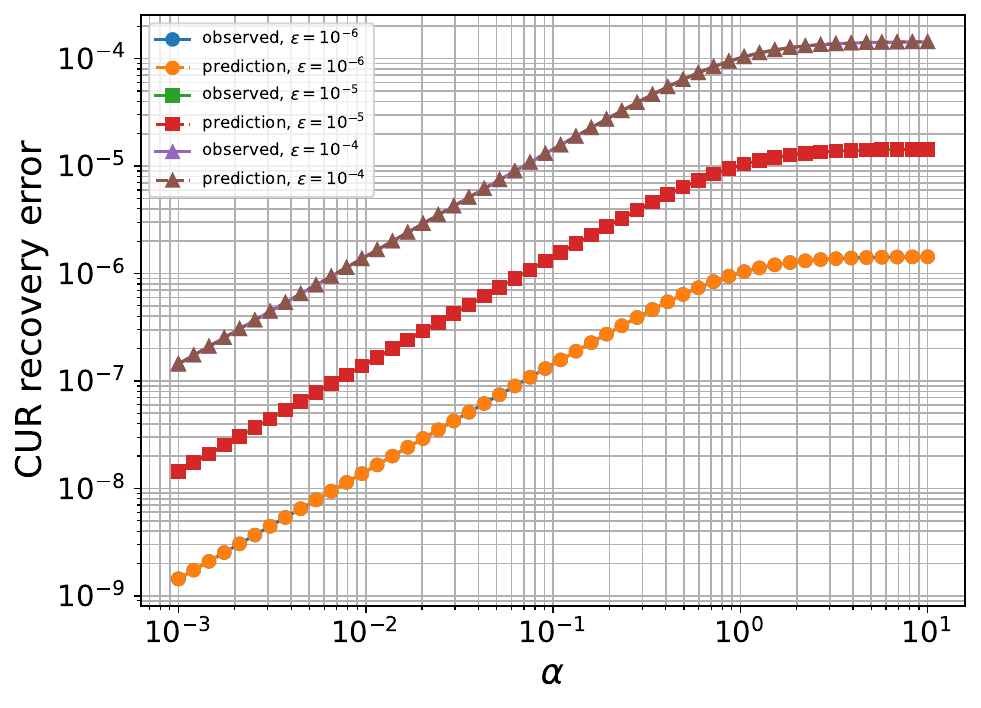}
\caption{
Gradually visible perturbations for three perturbation sizes. The parameter
\(\alpha>0\) controls the amount of noise visible through the selected rows and
columns. For each fixed \(\varepsilon\), the observed CUR recovery error
follows the first-order prediction
\(\varepsilon\|\mathcal I_{T_M}^{S,P}E_\alpha\|_{\mathrm F}\). The plateau for
large \(\alpha\) occurs because \(E_\alpha\) approaches the normalized visible
component.
}
\label{fig:visibility}
\end{figure}

\section{Conclusion}
\label{sec:conclusion}

We revisited CUR perturbation analysis from a local tangent-space viewpoint.
For a fixed admissible row and column sampling pattern, we studied the
rank-truncated CUR map
\(
    \Phi_r(A)=AP(S^\top AP)_r^\dagger S^\top A
\)
near a rank-\(r\) matrix \(M=U\Sigma V^\top\). We showed that the
Fr\'echet derivative of this map is the sampling-induced oblique tangent
projector
\[
    D\Phi_r(M)[E]
    =
    \mathcal I_{T_M}^{S,P}E
    =
    \Pi_U E+E\Pi_V-\Pi_U E\Pi_V,
\]
where
\(
    \Pi_U=U(S^\top U)^\dagger S^\top, 
    \Pi_V=P(V^\top P)^\dagger V^\top.
\)
Consequently,
\[
    \Phi_r(M+E)-M
    =
    \mathcal I_{T_M}^{S,P}E+O(\|E\|^2).
\]
This expansion gives a directional refinement of norm-based CUR perturbation
bounds. It shows that the leading recovery error is determined by the part of
the perturbation retained by the selected rows and columns, rather than by the
full perturbation norm alone. In particular, if the perturbation is invisible
to the selected rows and columns, \(S^\top E=0\) and \(EP=0\), then
\(\mathcal I_{T_M}^{S,P}E=0\), and the CUR recovery error is second order.

The comparison with the classical SVD expansion,
\(
    (M+E)_r-M
    =
    \mathcal P_{T_M}E+O(\|E\|^2),
\)
shows that CUR and SVD remove different first-order perturbation components.
SVD removes orthogonal-normal perturbations to first order, while
rank-truncated CUR removes perturbations in the kernel of the
sampling-induced oblique tangent projector. Thus neither method is uniformly
better; their local recovery behavior depends on both the perturbation
geometry and the sampling pattern. The numerical experiments confirm that the
expansion predicts not only the local rate, but also the leading first-order
coefficient.

A natural next direction is to extend this local tangent-space viewpoint to
tensor CUR and tensor cross approximations. Tensor CUR-type methods have been
developed for multilinear-rank, Tucker, \(t\)-product, and tensor-train
formats
\cite{caiafa2010generalizing,cai2021modewise,
che2022perturbations,chen2022tensorcur,oseledets2011tensortrain,
oseledets2010ttcross,qin2022error}. In these settings
the geometry is richer: tensor low-rank models have different tangent-space
structures, and sampling may occur through fibers, slices, subtensors, or
entries. Deriving local expansions for tensor CUR or tensor-train cross maps
could identify the corresponding sampling-induced oblique tangent projectors
and clarify which tensor perturbation components are retained or removed to
first order.

\section*{Acknowledgments}

The author acknowledges the use of ChatGPT, an AI language model developed by
OpenAI, for assistance with editing, polishing, and improving the clarity of
the manuscript text. The AI tool was used to suggest wording improvements,
identify possible organizational issues, and check exposition. All mathematical
content, including theorems, proofs, numerical experiments, interpretations,
and conclusions, was developed and verified by the author, who assumes full
responsibility for the integrity, accuracy, originality, and correctness of the
submitted work.
\appendix
\section{Auxiliary fixed-rank perturbation facts}
\label{app:auxiliary}

We collect two standard fixed-rank perturbation facts used in the proof of
Theorem~\ref{thm:cur-derivative}. The first is the tangent-space
characterization of velocities of fixed-rank curves; see, for example,
\cite[Section~2]{vandereycken2013low} and
\cite[Section~9.2]{uschmajew2020geometric}. The second is the derivative of
the Moore--Penrose pseudoinverse along a fixed-rank curve; see, for example,
\cite{golub1973differentiation,golub1976differentiation}. 

Throughout this appendix, let
\(
    \mathcal M_r
    =
    \{X\in\mathbb R^{s\times c}:\rank(X)=r\}.
\)
Let \(W\in\mathcal M_r\) have compact SVD
\(
    W=Q\Lambda Z^\top,
\) 
where
\(
    Q\in\mathbb R^{s\times r},
    Z\in\mathbb R^{c\times r},
    Q^\top Q=Z^\top Z=I_r,
\)
and \(\Lambda\in\mathbb R^{r\times r}\) is nonsingular.

\begin{lemma}[Velocity of a fixed-rank curve]
\label{lem:fixed-rank-velocity}
Let \(W(t)\in\mathbb R^{s\times c}\) be differentiable at \(t=0\), and suppose
that \(\rank(W(t))=r\) for all \(t\) near \(0\). Let \(W(0)=W=Q\Lambda Z^\top\)
and
\[
    \dot W
    =
    \frac{d}{dt}W(t)\bigg|_{t=0}.
\]
Then
\[
    (I-QQ^\top)\dot W(I-ZZ^\top)=0.
\]
Equivalently, \(\dot W\in T_W\mathcal M_r\).
\end{lemma}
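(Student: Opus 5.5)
The plan is to show that the normal component \((I-QQ^\top)W(t)(I-ZZ^\top)\) vanishes to second order in \(t\). Its first-order coefficient is exactly \((I-QQ^\top)\dot W(I-ZZ^\top)\). Write \(W(t)=W+t\dot W+o(t)\). Since \(W(0)=W=Q\Lambda Z^\top\), the orthogonal complements kill the base point:
\[
(I-QQ^\top)W(I-ZZ^\top)=0 .
\]
So it suffices to prove
\[
\|(I-QQ^\top)W(t)(I-ZZ^\top)\|=O(t^2),
\]
or at least \(o(t)\).

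\textbf{Block decomposition.} Choose orthonormal completions \([Q,Q_\perp]\) and \([Z,Z_\perp]\), and write \(W(t)\) in block form:
\[
\begin{bmatrix} Q^\top W(t)Z & Q^\top W(t)Z_\perp\\ Q_\perp^\top W(t)Z & Q_\perp^\top W(t)Z_\perp\end{bmatrix}
=\begin{bmatrix} A(t)&B(t)\\ C(t)&D(t)\end{bmatrix}.
\]
Here \(A(0)=\Lambda\) is nonsingular and \(B(0)=C(0)=D(0)=0\). By continuity at \(t=0\) (which follows from differentiability), \(A(t)\) stays invertible for small \(|t|\). The rank constraint \(\rank W(t)=r=\rank A(t)\) then forces the Schur complement to vanish:
\[
D(t)=C(t)A(t)^{-1}B(t).
\]
This step uses the standard fact that the rank of the block matrix equals \(\rank A+\rank(D-CA^{-1}B)\).

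\textbf{Size of the normal block.} Since \(B(t)=O(t)\) and \(C(t)=O(t)\) by differentiability, and \(A(t)^{-1}\) is bounded near \(0\), we get \(\|D(t)\|=O(t^2)\). Hence
\[
Q_\perp^\top\dot W Z_\perp=\lim_{t\to0}\frac{D(t)-D(0)}{t}=0,
\]
which is equivalent to \((I-QQ^\top)\dot W(I-ZZ^\top)=0\). The equivalence with \(\dot W\in T_W\mathcal M_r\) is then the characterization of the tangent space recalled in Section~\ref{sec:preliminaries}, applied to \(\mathcal M_r\subset\mathbb R^{s\times c}\).

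\textbf{Main obstacle.} The only delicate point is justifying the Schur-complement identity under the mere rank hypothesis. I would handle it with the explicit factorization
\[
\begin{bmatrix}A&B\\C&D\end{bmatrix}
=\begin{bmatrix}I&0\\CA^{-1}&I\end{bmatrix}
\begin{bmatrix}A&0\\0&D-CA^{-1}B\end{bmatrix}
\begin{bmatrix}I&A^{-1}B\\0&I\end{bmatrix}.
\]
The outer factors are invertible, so the rank of the block matrix is \(r+\rank(D-CA^{-1}B)\). Setting this equal to \(r\) gives \(D-CA^{-1}B=0\). Everything else is routine bookkeeping of \(O(t)\) terms.
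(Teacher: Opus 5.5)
Your proposal is correct and follows essentially the same route as the paper. Both arguments write \(W(t)\) in the bases \([Q,Q_\perp]\) and \([Z,Z_\perp]\), use the rank constraint to force the Schur complement of the invertible top-left block to vanish, and conclude that the normal block is \(O(t^2)\), so that \(Q_\perp^\top\dot W Z_\perp=0\). Your explicit block factorization justifying \(\rank W(t)=r+\rank\bigl(D-CA^{-1}B\bigr)\) is a step the paper uses without comment.
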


\begin{proof}
This is a standard characterization of the tangent space of the fixed-rank
matrix manifold; we give the direct argument. Let
\(Q_\perp\in\mathbb R^{s\times(s-r)}\) and
\(Z_\perp\in\mathbb R^{c\times(c-r)}\) be orthonormal complements of \(Q\)
and \(Z\). Since \(W(t)\) is differentiable at \(0\),
\[
    W(t)=W+t\dot W+o(t).
\]
Write the perturbation in the orthonormal bases \([Q,Q_\perp]\) and
\([Z,Z_\perp]\):
\[
    \begin{bmatrix}
    Q^\top\\
    Q_\perp^\top
    \end{bmatrix}
    \dot W
    \begin{bmatrix}
    Z & Z_\perp
    \end{bmatrix}
    =
    \begin{bmatrix}
    H_{11} & H_{12}\\
    H_{21} & H_{22}
    \end{bmatrix}.
\]
Then
\[
    \begin{bmatrix}
    Q^\top\\
    Q_\perp^\top
    \end{bmatrix}
    W(t)
    \begin{bmatrix}
    Z & Z_\perp
    \end{bmatrix}
    =
    \begin{bmatrix}
    \Lambda+tH_{11}+o(t) & tH_{12}+o(t)\\
    tH_{21}+o(t) & tH_{22}+o(t)
    \end{bmatrix}.
\]
For \(t\) sufficiently small, the block
\(\Lambda+tH_{11}+o(t)\) is invertible. Since \(\rank(W(t))=r\), the Schur
complement of this block must vanish:
\[
\begin{aligned}
    0
    &=
    tH_{22}+o(t)  
    -
    \bigl(tH_{21}+o(t)\bigr)
    \bigl(\Lambda+tH_{11}+o(t)\bigr)^{-1}
    \bigl(tH_{12}+o(t)\bigr).
\end{aligned}
\]
The inverse remains bounded as \(t\to0\), so the second term is \(O(t^2)\).
Hence
\[
    tH_{22}+o(t)=O(t^2).
\]
Dividing by \(t\) and letting \(t\to0\) gives \(H_{22}=0\). Therefore
\[
    Q_\perp^\top \dot W Z_\perp=0,
\]
which is equivalent to
\[
    (I-QQ^\top)\dot W(I-ZZ^\top)=0.
\]
\end{proof}

The next lemma gives the differential of the Moore--Penrose pseudoinverse
along a fixed-rank curve. The formula is classical
\cite{golub1973differentiation,golub1976differentiation}. We include the
following derivation to record the block structure used later.

\begin{lemma}[Derivative of the Moore--Penrose pseudoinverse]
\label{lem:pinv-derivative}
Let \(W(t)\in\mathbb R^{s\times c}\) be a \(C^1\) curve such that
\(\rank(W(t))=r\) for all \(t\) near \(0\). Let
\[
    W=W(0),
    \qquad
    \dot W=\frac{d}{dt}W(t)\bigg|_{t=0}.
\]
Then \(W(t)^\dagger\) is differentiable at \(t=0\), and
\[
\begin{aligned}
    \frac{d}{dt}W(t)^\dagger\bigg|_{t=0}
    &=
    -W^\dagger \dot W W^\dagger   
    +
    W^\dagger W^{\dagger\top}\dot W^\top(I-WW^\dagger)   
    +
    (I-W^\dagger W)\dot W^\top W^{\dagger\top}W^\dagger .
\end{aligned}
\]
Note that by the chain rule, we have 
\[
    \frac{d}{dt}W(t)^\dagger\bigg|_{t=0}
    =
    D(W^\dagger)[\dot W],
\]
therefore, \[
\begin{aligned}
   D(W^\dagger)[\dot W]
    &=
    -W^\dagger \dot W W^\dagger   
    +
    W^\dagger W^{\dagger\top}\dot W^\top(I-WW^\dagger)   
    +
    (I-W^\dagger W)\dot W^\top W^{\dagger\top}W^\dagger .
\end{aligned}
\]
\end{lemma}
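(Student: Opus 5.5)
The plan is to reduce the curve \(W(t)\) to a fixed-size invertible core. On that core the pseudoinverse is an ordinary inverse, which is smooth, so its derivative can be read off by the product rule.

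\textbf{Step 1: a factorization with a smooth invertible core.} Since \(W(t)\) is \(C^1\) with constant rank \(r\), we can write \(W(t)=Q(t)\Lambda(t)Z(t)^\top\) near \(t=0\), where \(Q(t),Z(t)\) have orthonormal columns and \(\Lambda(t)\) is an invertible \(r\times r\) matrix. To avoid smoothness problems with individual singular vectors, we do not use the SVD factors themselves. Instead take \(Q(t)\) to be the orthonormal factor from Gram--Schmidt applied to \(W(t)Z_0\), where \(Z_0=Z\). The matrix \(W(t)Z_0\) has full column rank \(r\) for small \(t\): at \(t=0\) it equals \(Q\Lambda\). Hence \(Q(t)\) is \(C^1\) and \(\range Q(t)=\range W(t)\); the range equality uses the constant-rank hypothesis.

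Define \(Z(t)\) symmetrically from \(W(t)^\top Q_0\), and set \(\Lambda(t)=Q(t)^\top W(t)Z(t)\). This core is invertible near \(0\). Then
\[
W(t)^\dagger=Z(t)\Lambda(t)^{-1}Q(t)^\top,
\]
which is \(C^1\) because matrix inversion is smooth on invertible matrices. This gives the differentiability claim.

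\textbf{Step 2: identifying the derivative.} Rather than differentiating the factors, we differentiate the Penrose identities, which is cleaner. Set \(X(t)=W(t)^\dagger\), and write \(P_R=WW^\dagger\) and \(P_C=W^\dagger W\).

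First differentiate \(WXW=W\). This gives \(\dot W X W+W\dot X W+WX\dot W=\dot W\). Multiplying on both sides by \(W^\dagger\) and using \(W^\dagger W W^\dagger=W^\dagger\) gives the block
\[
P_C\dot X P_R=-W^\dagger\dot W W^\dagger.
\]

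Next, the range conditions \(X=P_C X P_R\), \(\range X(t)=\range W(t)^\top\) and \(\ker X(t)=\ker W(t)^\top\) determine the off-diagonal blocks. Differentiate \(X=XWX\), equivalently \((I-P_C)X=0\) with \(P_C=XW\) symmetric. Write \(X=X X^\top W^\top\), which follows from \(XW\) being symmetric: \(XWX=(XW)^\top X=W^\top X^\top X\)... The cleaner route uses the two identities
\[
X=W^\top X^\top X \quad\text{and}\quad X=XX^\top W^\top.
\]
Differentiating the first and multiplying on the left by \((I-P_C)\) gives
\[
(I-P_C)\dot X=(I-P_C)\dot W^\top X^\top X.
\]
Here the other terms vanish because \((I-P_C)W^\top=0\). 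Similarly, differentiating the second and multiplying on the right by \((I-P_R)\) gives
\[
\dot X(I-P_R)=XX^\top\dot W^\top(I-P_R).
\]

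\textbf{Step 3: assembling the blocks.} Decompose
\[
\dot X=P_C\dot XP_R+P_C\dot X(I-P_R)+(I-P_C)\dot X.
\]
The middle term equals \(P_C XX^\top\dot W^\top(I-P_R)=W^\dagger W^{\dagger\top}\dot W^\top(I-WW^\dagger)\), using \(P_C X=X\). The last term equals \((I-W^\dagger W)\dot W^\top W^{\dagger\top}W^\dagger\). Adding the three blocks gives the stated formula. The chain-rule restatement in terms of \(D(W^\dagger)[\dot W]\) is then immediate: \(\cdot^\dagger\) is smooth on \(\mathcal M_r\) by Step 1, and the velocity \(\dot W\) is tangent by Lemma~\ref{lem:fixed-rank-velocity}.

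\textbf{Main obstacle.} I expect the hardest part to be Step 1, the rigorous \(C^1\) regularity of the pseudoinverse. The constant-rank hypothesis enters only there, through the range equality \(\range Q(t)=\range W(t)\). Once regularity is established, the differentiation of the Penrose identities in Steps 2 and 3 is mechanical.
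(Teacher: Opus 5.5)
Your proposal is correct, but it takes a different route from the paper.

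\textbf{The paper's route.} The paper works in the fixed SVD bases $[Q,Q_\perp]$ and $[Z,Z_\perp]$ of $W$. It uses Lemma~\ref{lem:fixed-rank-velocity} to eliminate the block $H_{22}$ and builds a first-order rank factorization $L(t)B(t)R(t)=W+t\dot W+O(t^2)$. It then appeals to smoothness of the pseudoinverse on $\mathcal M_r$ to replace $W(t)^\dagger$ by $R(t)^\dagger B(t)^{-1}L(t)^\dagger+o(t)$. Expanding the three factors gives the derivative in block form, which is finally matched to the invariant formula.

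\textbf{Your route.} You separate regularity from identification. First, you build a genuinely $C^1$ factorization $W(t)=Q(t)\Lambda(t)Z(t)^\top$, taking $Q(t),Z(t)$ from the QR factors of $W(t)Z$ and $W(t)^\top Q$. Constant rank gives $\range Q(t)=\range W(t)$ and the analogous statement for $Z(t)$, so $W(t)^\dagger=Z(t)\Lambda(t)^{-1}Q(t)^\top$ is $C^1$. Second, you identify the derivative without choosing bases. You differentiate $WXW=W$, $X=W^\top X^\top X$ and $X=XX^\top W^\top$, then split $\dot X$ using $P_C=W^\dagger W$ and $P_R=WW^\dagger$.

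\textbf{What each buys.} Your route is more rigorous on differentiability. The paper's step ``a first-order equivalent fixed-rank factorization changes the pseudoinverse only by $o(t)$'' tacitly needs a local Lipschitz bound for $A\mapsto A^\dagger$ on rank-$r$ matrices. That is essentially the claim being proved, or a Wedin-type estimate the paper does not supply. Your Step~1 reduces everything to smoothness of QR and of matrix inversion. Your Step~2 also never needs $H_{22}=0$, since the formula does not involve that block. The paper's route, in exchange, records the derivative as explicit blocks $Z(\cdot)Q^\top$, $Z(\cdot)Q_\perp^\top$ and $Z_\perp(\cdot)Q^\top$ in terms of $H_{11},H_{12},H_{21}$. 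Your three-term split $P_C\dot XP_R+P_C\dot X(I-P_R)+(I-P_C)\dot X$ carries the same information in projector form, which is exactly what Lemma~\ref{lem:cur-pinv-consequence} uses.

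\textbf{One cosmetic fix.} In Step~2, the sentence introducing $X=XX^\top W^\top$ attributes it to symmetry of $XW$ and then derives the other identity. In fact $X=W^\top X^\top X$ follows from $XW=(XW)^\top$, and $X=XX^\top W^\top$ follows from $WX=(WX)^\top$.
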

\begin{proof}
Let
\(
    W=Q\Lambda Z^\top
\) 
be a compact SVD of \(W\). Then
\[
    W^\dagger=Z\Lambda^{-1}Q^\top,
    \qquad
    WW^\dagger=QQ^\top,
    \qquad
    W^\dagger W=ZZ^\top.
\]
Let \(Q_\perp\) and \(Z_\perp\) be orthonormal complements of \(Q\) and \(Z\).
By Lemma~\ref{lem:fixed-rank-velocity},
\(
    Q_\perp^\top \dot W Z_\perp=0.
\)
Thus, in the bases \([Q,Q_\perp]\) and \([Z,Z_\perp]\),
\[
    \begin{bmatrix}
    Q^\top\\
    Q_\perp^\top
    \end{bmatrix}
    \dot W
    \begin{bmatrix}
    Z & Z_\perp
    \end{bmatrix}
    =
    \begin{bmatrix}
    H_{11} & H_{12}\\
    H_{21} & 0
    \end{bmatrix}.
\]
Therefore
\[
    W(t)
    =
    [Q,Q_\perp]
    \left(
    \begin{bmatrix}
    \Lambda & 0\\
    0 & 0
    \end{bmatrix}
    +
    t
    \begin{bmatrix}
    H_{11} & H_{12}\\
    H_{21} & 0
    \end{bmatrix}
    +
    o(t)
    \right)
    [Z,Z_\perp]^\top .
\]
We use a first-order rank factorization:
\[
    W(t)
    =
    L(t)B(t)R(t)+o(t),
\]
where
\[
    L(t)
    =
    [Q,Q_\perp]
    \begin{bmatrix}
    I\\
    tH_{21}\Lambda^{-1}
    \end{bmatrix},
    \qquad
    B(t)=\Lambda+tH_{11},
\]
and
\[
    R(t)
    =
    \begin{bmatrix}
    I & t\Lambda^{-1}H_{12}
    \end{bmatrix}
    [Z,Z_\perp]^\top .
\]
Indeed, direct multiplication gives
\[
    L(t)B(t)R(t)
    =
    W+t\dot W+O(t^2).
\]
For small \(t\), \(L(t)\) has full column rank, \(R(t)\) has full row rank,
and \(B(t)\) is nonsingular. The Moore--Penrose inverse is smooth on the
fixed-rank manifold. Therefore, replacing \(W(t)\) by a first-order equivalent
fixed-rank factorization changes the pseudoinverse only by \(o(t)\). Hence, to first order,  
\[
    W(t)^\dagger
    =
    R(t)^\dagger B(t)^{-1}L(t)^\dagger+o(t).
\]
The factors have the expansions
\[
    L(t)^\dagger
    =
    \begin{bmatrix}
    I & t\Lambda^{-1}H_{21}^\top
    \end{bmatrix}
    [Q,Q_\perp]^\top
    +O(t^2),
\]
\[
    R(t)^\dagger
    =
    [Z,Z_\perp]
    \begin{bmatrix}
    I\\
    tH_{12}^\top\Lambda^{-1}
    \end{bmatrix}
    +O(t^2),
\]
and
\[
    B(t)^{-1}
    =
    \Lambda^{-1}
    -
    t\Lambda^{-1}H_{11}\Lambda^{-1}
    +
    O(t^2).
\]
Combining these expansions gives
\[
\begin{aligned}
    W(t)^\dagger
    &=
    [Z,Z_\perp]
    \begin{bmatrix}
    I\\
    tH_{12}^\top\Lambda^{-1}
    \end{bmatrix}
    \left(
    \Lambda^{-1}
    -
    t\Lambda^{-1}H_{11}\Lambda^{-1}
    \right)  
    \begin{bmatrix}
    I & t\Lambda^{-1}H_{21}^\top
    \end{bmatrix}
    [Q,Q_\perp]^\top
    +
    o(t).
\end{aligned}
\]
Hence
\[
\begin{aligned}
    \frac{d}{dt}W(t)^\dagger\bigg|_{t=0}
    &=
    Z(-\Lambda^{-1}H_{11}\Lambda^{-1})Q^\top  
+Z(\Lambda^{-2}H_{21}^\top)Q_\perp^\top 
    +
    Z_\perp(H_{12}^\top\Lambda^{-2})Q^\top .
\end{aligned}
\]
Finally, this block expression is exactly the invariant formula in the
statement. Indeed,
\[
    H_{11}=Q^\top\dot W Z,
    \qquad
    H_{12}=Q^\top\dot W Z_\perp,
    \qquad
    H_{21}=Q_\perp^\top\dot W Z.
\]
Using
\[
    W^\dagger=Z\Lambda^{-1}Q^\top,
    \qquad
    WW^\dagger=QQ^\top,
    \qquad
    W^\dagger W=ZZ^\top,
\]
we obtain
\[
    -W^\dagger\dot W W^\dagger
    =
    Z(-\Lambda^{-1}H_{11}\Lambda^{-1})Q^\top,
\]
\[
    W^\dagger W^{\dagger\top}\dot W^\top(I-WW^\dagger)
    =
Z(\Lambda^{-2}H_{21}^\top)Q_\perp^\top,
\]
and
\[
    (I-W^\dagger W)\dot W^\top W^{\dagger\top}W^\dagger
    =
    Z_\perp(H_{12}^\top\Lambda^{-2})Q^\top.
\]
Adding these three identities proves the formula.
\end{proof}

We now record the particular consequence of Lemma~\ref{lem:pinv-derivative}
used in the proof of Theorem~\ref{thm:cur-derivative}.

\begin{lemma}
\label{lem:cur-pinv-consequence}
Let \(M\in\mathbb R^{m\times n}\) have rank \(r\), and let \((S,P)\) be
admissible for \(M\). Set
\(
    W=S^\top M P.
\)
Then
\[
    (I-WW^\dagger)S^\top M=0,
    \qquad
    MP(I-W^\dagger W)=0.
\]
Consequently, for any fixed-rank perturbation direction \(\dot W\),
\[
    MP\,D(W^\dagger)[\dot W]\,S^\top M
    =
    -MPW^\dagger \dot W W^\dagger S^\top M.
\]
\end{lemma}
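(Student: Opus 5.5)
The plan is to prove the two identities \((I-WW^\dagger)S^\top M=0\) and \(MP(I-W^\dagger W)=0\) by range arguments. The consequence then follows by multiplying the pseudoinverse-derivative formula of Lemma~\ref{lem:pinv-derivative} on the left by \(MP\) and on the right by \(S^\top M\).

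For the first identity, write \(W=A_S\Sigma B_P\) with \(A_S=S^\top U\) and \(B_P=V^\top P\). Admissibility gives \(A_S\) full column rank \(r\) and \(B_P\) full row rank \(r\). Since \(\Sigma B_P\) has full row rank, \(\range(W)=\range(A_S)\). Hence \(WW^\dagger\), the orthogonal projector onto \(\range(W)\), is the orthogonal projector onto \(\range(S^\top U)\). Now \(S^\top M=A_S\Sigma V^\top\) has its columns in \(\range(A_S)\), so \((I-WW^\dagger)S^\top M=0\).

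The second identity is symmetric. Here \(W^\dagger W\) is the orthogonal projector onto \(\range(W^\top)=\range(B_P^\top)\), because \(A_S\Sigma\) has full column rank. Moreover \((MP)^\top=B_P^\top\Sigma U^\top\) has columns in \(\range(B_P^\top)\). Therefore \((I-W^\dagger W)(MP)^\top=0\), and transposing gives \(MP(I-W^\dagger W)=0\), using the symmetry of \(W^\dagger W\).

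For the consequence, substitute the three-term formula of Lemma~\ref{lem:pinv-derivative} into \(MP\,D(W^\dagger)[\dot W]\,S^\top M\).
\begin{itemize}
\item The second term is \(MPW^\dagger W^{\dagger\top}\dot W^\top(I-WW^\dagger)S^\top M\). It vanishes because \((I-WW^\dagger)S^\top M=0\).
\item The third term is \(MP(I-W^\dagger W)\dot W^\top W^{\dagger\top}W^\dagger S^\top M\). It vanishes because \(MP(I-W^\dagger W)=0\).
\item Only \(-MPW^\dagger\dot W W^\dagger S^\top M\) remains, which is the claimed identity.
\end{itemize}
This argument uses no property of \(\dot W\) beyond its appearing in the formula, so it holds for any fixed-rank direction.

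The only step needing care is the range identification \(\range(W)=\range(S^\top U)\), which rests on the full-rank factorization supplied by admissibility. This step is routine, and I expect no real obstacle.
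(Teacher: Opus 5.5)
Your proposal is correct and follows essentially the same route as the paper. The paper also identifies $\range(W)=\range(S^\top U)$ and the row space of $W$ via the admissible factorization $W=(S^\top U)\Sigma(V^\top P)$, deduces the two annihilation identities, and then multiplies the three-term pseudoinverse derivative formula by $MP$ and $S^\top M$ so that the second and third terms drop out.
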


\begin{proof}
Let \(M=U\Sigma V^\top\) be a compact SVD. Since \((S,P)\) is admissible,
\[
    \rank(S^\top U)=r,
    \qquad
    \rank(V^\top P)=r.
\]
Thus
\(
    W=S^\top M P=(S^\top U)\Sigma(V^\top P)
\)
has rank \(r\).

First, the columns of \(S^\top M\) lie in \(\range(W)\). Indeed,
\(
    S^\top M=(S^\top U)\Sigma V^\top,
\) 
so all columns of \(S^\top M\) lie in \(\range(S^\top U)\). Since
\(V^\top P\) has full row rank and \(\Sigma\) is nonsingular,
\[
    \range(W)
    =
    \range\bigl((S^\top U)\Sigma(V^\top P)\bigr)
    =
    \range(S^\top U).
\]
Therefore
\[
    (I-WW^\dagger)S^\top M=0,
\]
because \(WW^\dagger\) is the orthogonal projector onto \(\range(W)\).

By the analogous row-space argument, using that \(S^\top U\) has full column
rank and \(\Sigma\) is nonsingular, the rows of \(MP\) lie in the row space of
\(W\). Hence
\(
    MP(I-W^\dagger W)=0,
\) 
because \(W^\dagger W\) is the orthogonal projector onto the row space of
\(W\).

Now multiply the pseudoinverse derivative formula from
Lemma~\ref{lem:pinv-derivative} on the left by \(MP\) and on the right by
\(S^\top M\). The second term vanishes because
\(
    (I-WW^\dagger)S^\top M=0,
\)
and the third term vanishes because
\(
    MP(I-W^\dagger W)=0.
\)
Therefore
\[
    MP\,D(W^\dagger)[\dot W]\,S^\top M
    =
    -MPW^\dagger \dot W W^\dagger S^\top M.
\]
\end{proof}

In the proof of Theorem~\ref{thm:cur-derivative}, this result is applied to
the fixed-rank curve
\[
    W_r(t)=\bigl(S^\top A(t)P\bigr)_r .
\]
The preceding lemma explains why only the first term in the full
pseudoinverse derivative contributes to the CUR derivative after multiplication
by the selected columns \(MP\) and selected rows \(S^\top M\).

\section{Proof of the rank-truncation expansion}
\label{app:rank-truncation-proof}

We now prove Lemma~\ref{lem:rank-r-truncation-first-order}. The result is a
quantitative version of the standard fact that the derivative of the best
rank-\(r\) approximation map at a rank-\(r\) matrix is the orthogonal tangent
projector.

\begin{proof}[Proof of Lemma~\ref{lem:rank-r-truncation-first-order}]
Let \(P_Q=QQ^\top\) and \(P_Z=ZZ^\top\). Since \(\rank(W)=r\), we have
\(\sigma_{r+1}(W)=0\). By Weyl's inequality,
\[
    \sigma_{r+1}(\widetilde W)
    \le
    \sigma_{r+1}(W)+\|E\|_2
    =
    \|E\|_2.
\]
By the Eckart--Young theorem,
\[
    \|\widetilde W-\widehat W\|_2
    =
    \sigma_{r+1}(\widetilde W)
    \le
    \|E\|_2.
\]
Hence
\[
    \|\widehat W-W\|_2
    \le
    \|\widehat W-\widetilde W\|_2+\|\widetilde W-W\|_2
    \le
    2\|E\|_2.
\]
Set \(F=\widehat W-W\). Let \(Q_\perp\) and \(Z_\perp\) be orthonormal
complements of \(Q\) and \(Z\). In the bases \([Q,Q_\perp]\) and
\([Z,Z_\perp]\), write
\[
    \begin{bmatrix}
    Q^\top\\
    Q_\perp^\top
    \end{bmatrix}
    F
    \begin{bmatrix}
    Z & Z_\perp
    \end{bmatrix}
    =
    \begin{bmatrix}
    F_{11} & F_{12}\\
    F_{21} & F_{22}
    \end{bmatrix}.
\]
Then
\[
    \widehat W
    =
    [Q,Q_\perp]
    \begin{bmatrix}
    \Lambda+F_{11} & F_{12}\\
    F_{21} & F_{22}
    \end{bmatrix}
    [Z,Z_\perp]^\top.
\]
Since \(\|F\|_2\le 2\|E\|_2\le 2c\gamma\) and \(c<1/2\), the block
\(\Lambda+F_{11}\) is invertible. Indeed,
\[
    \sigma_r(\Lambda+F_{11})
    \ge
    \sigma_r(\Lambda)-\|F_{11}\|_2
    \ge
    \gamma-2\|E\|_2
    \ge
    (1-2c)\gamma.
\]
Because \(\widehat W\) has rank \(r\), the Schur complement of
\(\Lambda+F_{11}\) must vanish:
\[
    F_{22}
    -
    F_{21}(\Lambda+F_{11})^{-1}F_{12}
    =
    0.
\]
Therefore
\[
    F_{22}=F_{21}(\Lambda+F_{11})^{-1}F_{12}.
\]
It follows that
\[
\begin{aligned}
    \|F_{22}\|_2
    &\le
    \|F_{21}\|_2
    \|(\Lambda+F_{11})^{-1}\|_2
    \|F_{12}\|_2 \le
    (2\|E\|_2)
    \frac{1}{(1-2c)\gamma}
    (2\|E\|_2).
\end{aligned}
\]
Hence
\[
    \|F_{22}\|_2
    \le
    \frac{4}{1-2c}\frac{\|E\|_2^2}{\gamma}.
\]
Since
\[
    (I-P_Q)F(I-P_Z)=Q_\perp F_{22}Z_\perp^\top,
\]
we obtain
\begin{equation}
\label{eqn:bound_4_F}
    \|(I-\mathcal P_{T_W})F\|_2
    =
    \|(I-P_Q)F(I-P_Z)\|_2
    \le
    \frac{4}{1-2c}\frac{\|E\|_2^2}{\gamma}.
\end{equation}
Thus \(F\) is tangent to \(T_W\mathcal M_r\) up to a second-order error.

It remains to compare the tangent part of \(F\) with the tangent part of \(E\).
Define \(R=\widetilde W-\widehat W\). Since
\(\widetilde W=W+E\) and \(\widehat W=W+F\), we have \(R=E-F\). Applying
\(\mathcal P_{T_W}\) gives
\[
    \mathcal P_{T_W}E-\mathcal P_{T_W}F
    =
    \mathcal P_{T_W}R.
\]

Let \(\widehat W=\widehat Q\widehat\Lambda\widehat Z^\top\) be a compact SVD
of \(\widehat W\), and define
\[
    \widehat P_Q=\widehat Q\widehat Q^\top,
    \qquad
    \widehat P_Z=\widehat Z\widehat Z^\top.
\]
Since \(\widehat W\) is the best rank-\(r\) approximation of
\(\widetilde W\), the residual \(R=\widetilde W-\widehat W\) lies in the
discarded singular directions. Therefore
\[
    \widehat P_QR=0,
    \qquad
    R\widehat P_Z=0,
\]
and hence
\(
    \mathcal P_{T_{\widehat W}}R=0.
\)
Consequently,
\[
    \mathcal P_{T_W}R
    =
    \bigl(\mathcal P_{T_W}-\mathcal P_{T_{\widehat W}}\bigr)R.
\]
By \cite[Lemma~4.2]{wei2016guarantees}, 
\[
    \|\widehat P_Q-P_Q\|_2
    \le \frac{\|\widehat{W}-W\|_2}{\sigma_r(W)}\leq \frac{2\|E\|_2}{\gamma}.
\]
Similarly,
\[
    \|\widehat P_Z-P_Z\|_2
    \le
    \frac{2\|E\|_2}{\gamma}.
\]
Set
\(
    \eta=\frac{2\|E\|_2}{\gamma}.
\)
Then
\[
    \|\widehat P_Q-P_Q\|_2\le \eta,
    \qquad
    \|\widehat P_Z-P_Z\|_2\le \eta.
\]
For any matrix \(X\),
\[
    \mathcal P_{T_W}(X)=P_QX+XP_Z-P_QXP_Z,
\]
and
\[
    \mathcal P_{T_{\widehat W}}(X)
    =
    \widehat P_QX+X\widehat P_Z-\widehat P_QX\widehat P_Z.
\]
Thus
\[
\begin{aligned}
    \mathcal P_{T_W}(X)-\mathcal P_{T_{\widehat W}}(X)
    &=
    (P_Q-\widehat P_Q)X
    +
    X(P_Z-\widehat P_Z) -
    P_QXP_Z
    +
    \widehat P_QX\widehat P_Z.
\end{aligned}
\]
For the product term,
\[
    \widehat P_QX\widehat P_Z-P_QXP_Z
    =
    (\widehat P_Q-P_Q)X\widehat P_Z
    +
    P_QX(\widehat P_Z-P_Z).
\]
Since all four projectors have spectral norm one,
\[
    \|\mathcal P_{T_W}(X)-\mathcal P_{T_{\widehat W}}(X)\|_2
    \le
    4\eta\|X\|_2.
\]
Therefore
\[
    \|\mathcal P_{T_W}-\mathcal P_{T_{\widehat W}}\|_{2\to2}
    \le
    4\eta
    =
    \frac{8\|E\|_2}{\gamma}.
\]
Moreover,
\[
    \|R\|_2
    =
    \|\widetilde W-\widehat W\|_2
    =
    \sigma_{r+1}(\widetilde W)
    \le
    \|E\|_2.
\]
Hence
\begin{equation}
\label{eqn:bound4_R}
\begin{aligned}
    \|\mathcal P_{T_W}R\|_2
    &=
    \|(\mathcal P_{T_W}-\mathcal P_{T_{\widehat W}})R\|_2 \le
    \|\mathcal P_{T_W}-\mathcal P_{T_{\widehat W}}\|_{2\to2}\|R\|_2 \le
    \frac{8\|E\|_2^2}{\gamma}.
\end{aligned}
\end{equation}
Since \(\mathcal P_{T_W}E-\mathcal P_{T_W}F=\mathcal P_{T_W}R\), we obtain
\[
    \|\mathcal P_{T_W}F-\mathcal P_{T_W}E\|_2
    \le
    \frac{8\|E\|_2^2}{\gamma}.
\]
Finally,
\[
\begin{aligned}
    F-\mathcal P_{T_W}E
    &=
    \mathcal P_{T_W}F+(I-\mathcal P_{T_W})F-\mathcal P_{T_W}E   =
    \bigl(\mathcal P_{T_W}F-\mathcal P_{T_W}E\bigr)
    +
    (I-\mathcal P_{T_W})F .
\end{aligned}
\]
Combining the bounds in \eqref{eqn:bound_4_F} and \eqref{eqn:bound4_R}, we have
\[
\begin{aligned}
    \|F-\mathcal P_{T_W}E\|_2
    &\le
    \|\mathcal P_{T_W}F-\mathcal P_{T_W}E\|_2
    +
    \|(I-\mathcal P_{T_W})F\|_2 \\
    &\le
    \left(
       8
        +
        \frac{4}{1-2c}
    \right)
    \frac{\|E\|_2^2}{\gamma}= \frac{12-16c}{1-2c}
    \frac{\|E\|_2^2}{\gamma}.
\end{aligned}
\] 
Because \(F=\widehat W-W\), this proves
\(
    \widehat W
    =
    W+\mathcal P_{T_W}E+R_{\mathrm{tr}}(E),
\)
with
\(
    \|R_{\mathrm{tr}}(E)\|_2
    \le
    \frac{12-16c}{1-2c}
    \frac{\|E\|_2^2}{\gamma}.
\)
\end{proof}
\bibliographystyle{siamplain}
\bibliography{ref}

\end{document}